%% file: gmrf_paper.tex
\input{header.tex}
\numberwithin{equation}{section}
\numberwithin{lemma}{section}
\title{Approximating Gaussian Whittle-Mat\'ern Fields over Well-Centered Triangulations of Riemannian Manifolds}
\author{Srinivas Nambirajan}
\date{June 10, 2026}
\begin{document}
\maketitle

\abstract{Markovian Whittle-Mat\'ern fields have been convergently approximated by discrete Gauss Markov Random Fields (GMRFs) with sparse precision matrices using a Finite Element approximation of the two-parameter family, 
\[ (\grk^2 - \lap)^{\gra/2} u = \c{W}, \;\; \grk \in \real, \; \gra \in \nat,  \]
of SPDEs.  Using recent developements in the analysis of Discrete Exterior Calculus (DEC), we present a different, yet closely related, convergent GMRF approximation to these Mat\'ern fields over complete boundaryless Riemannian manifolds of any dimension discretized as \emph{well-centered} simplicial complexes. This convergent method (i) is \emph{agnostic} to $\gra, \grk$ and thus allows a \emph{universal approximation scheme} for the precision and covariance matrices of the entire $(\gra, \grk)$-family of GMRFs, so they may be inferred rather than guessed. (ii) inherently models pointwise and piecewise-smoothed measurements of a random field and approximates both equally well (iii) is computationally independent of the interpolants used - it suffers no overhead if one convergent interpolant were replaced with another suitable interpolant over the same mesh. Furthermore, we show that, on discretizations that are well-connected in a precise sense, and \emph{volume-concentrated}, the precision matrices are spectral functions of a graph-laplacian. We provide a low rank approximator to the family of such Mat\'ern GMRFs and mention a use case: reducing the number of measurements needed to model the GMRF by compressed-sensing.}

\input{S.Intro}

\input{S.AbstractSetup}

\input{S.ECTools}

\input{S.Approximation}

\input{S.Results}

\input{S.FutureWork}

\input{S.Credits}

\bibliography{thebib}
\bibliographystyle{siam}
\end{document}

%% file: header.tex
\documentclass[]{article} 
\usepackage[mathcal]{eucal}
\usepackage{mathrsfs}
\usepackage{amsmath}
\usepackage{amssymb}
\usepackage{MnSymbol} 

\usepackage[a4paper,
		bindingoffset=0.15in,
		left=0.8in,
		right=0.8in,
		top=0.6in,
		bottom=0.6in,
	footskip=0.25in]{geometry}

\usepackage{amsthm}
\newtheorem{thm}{Theorem}
\newtheorem{lemma}{Lemma}
\newtheorem{prop}{Proposition}
\newtheorem{corl}{Corollary}
\newtheorem{defn}{Definition}
\newtheorem{assume}{Assumption}
\newtheorem{remark}{Remark}
\newtheorem{notation}{Notation}
\usepackage{graphicx}

\usepackage{tcolorbox} 
\usepackage{enumitem}
\usepackage{xcolor}  

\renewcommand{\comment}[1]{\hfill\textcolor{gray}{\#\ \textit{#1}}}

\newenvironment{minimalgo}[1]{%
  \begin{tcolorbox}[
    colback=gray!0.1,          
    colframe=gray!50,       
    boxrule=0.2pt,          
    arc=0pt,                
    boxsep=5pt,
    left=6pt, right=6pt, top=3pt, bottom=3pt 
  ]
  \noindent\textbf{Algorithm:} #1\\[0.8em]
}{%
  \vspace{0.8em}\noindent\textbf{end}
  \end{tcolorbox}
}

\usepackage{hyperref}

\renewcommand{\(}{\ensuremath{\left( \right.}}
\renewcommand{\)}{\ensuremath{\left. \right)}}
\newcommand{\<}{\ensuremath{\left \langle \right.}}
\renewcommand{\>}{\ensuremath{\left. \right \rangle}}

\newcommand{\seminorm}[1]{\ensuremath{ \left| #1 \right|}}
\newcommand{\abs}[1]{\ensuremath{\seminorm{#1}}}

\newcommand{\norm}[1]{\seminorm{\seminorm{#1}}}

\newcommand{\Gt}{\succ}

\newcommand{\Leq}{\preceq}

\newcommand{\gra}{\alpha}

\newcommand{\grd}{\delta}
\newcommand{\gre}{\epsilon}
\newcommand{\grf}{\phi}

\newcommand{\grk}{\kappa}
\newcommand{\grl}{\lambda}
\newcommand{\grm}{\mu}

\newcommand{\grp}{\pi}

\newcommand{\grr}{\rho}
\newcommand{\grs}{\sigma}
\newcommand{\grt}{\tau}

\newcommand{\grw}{\omega}

\newcommand{\grL}{\Lambda}

\newcommand{\grP}{\Pi}

\newcommand{\grS}{\Sigma}

\newcommand{\grW}{\Omega}

\newcommand{\from}{\leftarrow}

\newcommand{\f}[1]{\ensuremath{\mathscr{#1}}}
\renewcommand{\c}[1]{\ensuremath{\mathcal{#1}}}

\newcommand{\Prob}{\ensuremath{\mathbb{P}}}

\newcommand{\mean}[1]{\ensuremath{\mathbb{E}\left[ #1 \right]}}

\newcommand{\man}[1]{\ensuremath{\mathcal{#1}}}

\newcommand{\lap}{\ensuremath{\mathbf{\Delta}}}

\newcommand{\dlap}{\ensuremath{\mathtt{\Delta}}}

\newcommand{\dd}{\ensuremath{\mathtt{d}}}
\newcommand{\dad}{\ensuremath{\mathtt{\delta}}}
\newcommand{\dstar}{\ensuremath{\smallstar}}
\newcommand{\astar}{\ensuremath{\ast}}

\DeclareMathOperator{\Range}{Range}

\DeclareMathOperator{\Dim}{Dim}

\DeclareMathOperator{\Div}{div}
\DeclareMathOperator{\Grad}{grad}
\DeclareMathOperator{\Diagonal}{Diagonal}

\DeclareMathOperator{\Cov}{Cov}
\newcommand{\flange}[1]{\ensuremath{\boxslash_{#1}}} 
\newcommand{\real}{\ensuremath{\mathbb{R}}}
\newcommand{\reals}[1]{\ensuremath{\mathbb{R}^{#1}}}
\newcommand{\nat}{\ensuremath{\mathbb{N}}}

\newcommand{\rats}{\ensuremath{\mathbb{Q}}}

\newcommand{\heq}{\ensuremath{\stackrel{h}{\equiv}}}

\newcommand{\inv}[1]{\ensuremath{{#1}^{-1}}}

\newcommand{\floor}[1]{\ensuremath{\lfloor #1 \rfloor}}
\newcommand{\ceil}[1]{\ensuremath{\lceil #1 \rceil}}

%% file: S.Intro.tex
\section{Introduction}

Let $\lap:= \Div \circ \Grad$ be the usual laplacian over $H^2 \(\reals{d}\)$ and let $\c{W}$ be the white-noise process over $\reals{d}$. The solutions, $u$, to the Stochasic Partial Differential Equation (SPDE)
\begin{equation}\label{eqn:basic_spde}
(\grk^2 - \lap)^{\gra/2} u = \c{W}, \;\; \grk \in \real, \; \gra \in \reals{+} 
\end{equation} 
are known to be gaussian processes, with $\gra > d/2$ yielding processes having the Whittle-Mat\'ern covariance \cite{whittle1963}, and $\gra \in \nat$ being Markovian \cite{Rozanov77}.  
This may be extended to complete manifolds without boundary as in \cite{rue11} by replacing the positive semidefinite $(- \lap)$ in (\ref{eqn:basic_spde}) with the positive semidefinite \emph{Hodge-Laplacian} on 0-forms (equivalently the \emph{Laplace-Beltrami}), $\lap_0:= \grd d: H^2 \grL^0 (\man{M}) \to C\grL^0(\man{M})$, over a riemannian manifold, $\man{M}$, of dimension, $m$, to obtain
\begin{equation}\label{eqn:man0_spde} 
(\grk^2 + \lap_0)^{\gra/2} u = \c{W}, \;\; \grk \in \real, \; \gra \in \nat+m/2
\end{equation}  
with $u$ now a GMRF with Mat\'ern covariance over $\man{M}$.
In their foundational work in statistics and computation \cite{rue11} obtain a convergent, finite-dimensional approximation, $\Cov(\tilde{u})$ of $\Cov(u)$ for $u$ solving (\ref{eqn:man0_spde}) for $\gra \in \nat + d/2$ over a sphere using finite element techniques and establish the link to discrete Gauss-Markov Random Fields, as the coefficients, $x$, of $\tilde{u}$ form a GMRF with a discrete covariance matrix. The work of \cite{rue11, rue22} has been instrumental in a variety of fields \cite{bachl19inlabru, KellerLindstrom14, OzoneBolinLindgren11, OceanTempDinsdale19, MalariaBhatt15, Konstantinoudis20DiseaseModeling, Cameletti2013Spatio, Fuglstad15} from kriging to disease-modelling, due to computational ease \cite{KellerLindstrom14, RueSimpsonThink12} of obtaining the covariance of a sparse-precision GMRF \cite{RueINLA09, LindgrenRINLA15} over one obtained from sampling from a continuous Gaussian field using MCMC or similar.  

The definition of (\ref{eqn:man0_spde}) may be extended to $k$-form gaussians by simply using $\lap_k$ -
\begin{equation} \label{eqn:man_spde}
(\grk^2 + \lap_k)^{\gra/2} u = \c{W}_k, \;\; \grk \in \real, \; \gra \in \nat
\end{equation}
with the $k$-form gaussian white noise, $\c{W}_k$ and $u$ being expressed in the countable eigenbasis, $\{ \psi_{k, i} \}$ of $\lap_k$ with eigenvalues, $\grl_i$, as: 
\begin{equation} \label{eqn:kkkl} 
	u = \sum \grw_i \sqrt{\grl_i} \psi_{k, i}, \;\; \c{W}_k = \sum \grw_i \psi_{k, i}  
\end{equation}
where $\grw_i \sim \c{N}(0, 1)$ form a countable collection of iid random variables in each sum. Such $k$-form gaussians have been recently posed similarlty and studied in \cite{NicoudKrause24,CaoSheffieldGausskForms26}. 

The equivalent to FEM for (\ref{eqn:man_spde}) over $k$-forms for $k\geq 0$ is Finite Element Exterior Calculus (FEEC) \cite{AFW06, AFW09, AFWBook18} which provides tools for solving such equations over discretizations of manifolds. 

More recently, Discrete Exterior Calculus \cite{Hirani03, Desbrun03} was developed which circumvents the need to work with continuous objects at all when doing certain operations over discretized manifolds. It does so by providing consistent discrete equivalents of objects from exterior calculus that are defined on \emph{well-centered} simplicial triangulations of manifolds, and allows us to pose and solve elliptic PDEs in discrete form directly over a triangulation of a manifold. This approach has naturally found a variety of applications in computational geometry \cite{VecFieldDecompGraphics03, DiscShellsGrinspunHirani03, GeoDistPredCrane17}, and has led to generalizations \cite{LHL} to graphs and clique-complexes \footnote{1-skeletons of simplicial complexes} that have informed network analysis \cite{BrainNetHodge19, SensNetTahbaz10, LinkPredHodge18}, voting and ranking \cite{HodgeRank11}, etc. DEC reconciles with the continuous by establishing consistency with FEEC via convergence arguments using certain \emph{Interpolation maps}, and was recently shown by \cite{GP26} to provide convergent solutions to the $k$-form hodge-poisson problem over riemannian manifolds of arbitrary dimension and for all\footnote{results with restrictions to manifold-dimension and/or $k$ appear in \cite{Kaibo26, TS16DEC}} relevant $k$.

\subsection{On Approximating Measurement}
In most applications \cite{OzoneBolinLindgren11, OceanTempDinsdale19, MalariaBhatt15, Konstantinoudis20DiseaseModeling, Cameletti2013Spatio, Fuglstad15} measurements of $u$ are made at finitely many points on a manifold where the galerkin-approximation to $u$ solving (\ref{eqn:basic_spde}), (\ref{eqn:man_spde}) in the mean-square sense carries a further implicit assumption: that the measurement of the \emph{approximation of} $u$ is an effective proxy for the measurement of $u$\footnote{Indeed, measurement is its own source of error and floating-point representation makes approximation unavoidable. However, this assumption forces the approximation of $u$ to be sufficiently good \emph{at the points of measurement} to ensure that other such unavoidable errors are the ones of leading order.}. Such an assumption is justified by a good approximation of $u$. An upshot of this work is the removal of this implicit assumption by taking the exact measurement-operator into account. We thus approximate the covariance of the exact measurements without having to implicitly adjust them.


\subsection{Approximation-Scheme over Approximation}
The \emph{approximation-scheme} provided here is distinguished from an \emph{approximation} since it provides a space of discrete gaussian processes simultaneously for all $\gra, \grk$ with a specified error-tolerance. From such a specification, one may either obtain a particular approximation to a GMRF covariance matrix by specifying $\gra, \grk$, or statistically infer $\gra, \grk$ from measurements of $u$.

\subsection{Contribution}
We provide convergent, discrete \emph{approximation-schemes} to the covariances of 
\begin{enumerate}[label=(\alph*)]
	\item continuous gauss markov fields with Whittle-Mat\'ern covariance (\ref{eqn:man0_spde}) over $\man{M}$,
	\item continuous gauss markov fields whith Whittle-Mat\'ern covariance smoothed/averaged over certain triangulations of $\man{M}$
\end{enumerate}
such that:
\begin{enumerate}
	\item The process of obtaining the approximant is agnostic to $\gra, \grk$. So, rather than requiring the parameters, $\gra, \grk$, to obtain an approximation, they may be inferred from the approximation.
	\item They are computationally efficient; we obtain a single approximation operator for all $\gra, \grk$ in $O((kn_1 + k^2 n_0)/\sqrt{\bar{\gre}})$ where $n_0, n_1$ are the counts of nodes and edges in the mesh. Even this running time is a bit pessimistic in some cases - such as inferring $\gra, \grk$.
	\item They solve the problem identified in \cite{rue11} of obtaining convergent approximations by replacing the consistent mass-matrix in their formulation by the diagonal, lumped mass-matrix; \cite{rue11} use the quadrature-error approximation of Lemma 1 in \cite{ChenThom85} to partially justify such a replacement, with the rest of the justification being heuristic. Our approximation-schemes analytically justify such a replacement in its entirety.
	\item In applications where measurements of the Whittle-Mat\'ern process are made at certain locations on the manifold (e.g. kriging), the approximate covariance is that of the \emph{actual} measurements, as the De Rham map (\S\ref{sec:ectools}) is a model of a measurement operator.	
\end{enumerate}

Our analytical framework in making these contributions is consistent with the approach of Virtual Element Method (VEM) \cite{VEMBook22}, validating any observed similarities between DEC, VEM.

\subsection{Organization of Content}

We present the weak-solution scheme of (\ref{eqn:man_spde}) in the abstract and state our main result in \S\ref{sec:protoscheme}. We develop and present the necessary tools from Discrete Exterior Calculus and briefly summarize the recent proof of convergence of the DEC Hodge Poisson problem in \S\ref{sec:ectools}. We present the conditions of our approximation scheme and prove our result in \S\ref{sec:Approx}, followed by numerical results in \S\ref{sec:numResults}.

%% file: S.AbstractSetup.tex
\section{An Abstract Galerkin Argument} \label{sec:protoscheme}
We present the general scheme for solving polynomials in the hodge-laplacian in the mean-square sense, along the lines of \cite{rue11}. 
\begin{notation} \label{nota:basicManifold}
Let $\man{M}$ be a boundaryless uniformly $m$-dimensional riemannian manifold with metric, $\grm$ admitting a finite simplicial triangulation. Let $M$ be a finite simplicial triangulation of $\man{M}$. $M_h$ shall explicitly denote a simplicial triangulation with size-factor, $h$, when contextually necessary; this is considered implicit in $M$.

Let $\f{U}_k$ be an interpolant onto $C \grL^k (M)$, the space of continuous $k$-forms over the piecewise-flat \footnote{also called a \emph{regge}} $M$; and $\f{V}_k$ be an interpolant onto $\grL^k (M)$, the space of all $k$-forms over $M$.

	Let $p_r(\lap_k): H^{2r} \grL^k (\man{M}) \to C \grL (\man{M})$ be a strictly positive polynomial of integer order, $r$, in $\lap_k$, the hodge-laplacian on $k$-forms. 
\end{notation}

When solving 
 \begin{equation}\label{eq:polylap_fractional} 
	 (p_r (\lap_k))^{\gra/2} u = \grf, \; \; \grf \in \grL^k (\man{M}), u \in H^{2r \gra} \grL^k (\man{M}) 
\end{equation}
in the interior of $\man{M}$, we observe as in \cite{rue11} that (\ref{eq:polylap_fractional}) may be recursively described for $\gra \in 2 \nat$ with $u_{t} \in H^{2rt} \grL^k (\man{M})$ as
\begin{equation} \label{eq:polylap_recursive} 
	p_r (\lap_k) u_{t} = u_{t-2}, \;\; t \in \nat, t \leq \gra, \;\; u_0 = \grf.
\end{equation}
This helps obtain a convergent approximation, $\tilde{u}_t = \f{U}_k x_t$, to $u_t$, from an interpolant-space, $Range(\f{U}_k) \not \subset H^{2rt} \grL^k(M)$ by solving (\ref{eq:polylap_recursive}) over $M$, weakly \footnote{we choose the Galerkin-like formulation rather than the mixed-formulation that is the stable choice for general $k$ since we shall be interested only in mean-square solutions when $\grf$ is a random process for $k \in \{ 0, m\}$, and this choice is expositorily simpler.}
as 
\begin{equation} \label{eqn:polylap_recursive_weakform} 
\begin{array}{rl}
	(\f{V}_k z,  p_r (\lap_k) \f{U}_k x_t ) &= (\f{V}_k z, \f{U}_k x_{t-2}), \;\; t \in \nat, \; t \leq \gra, \\ 
	(\f{V}_k z, p_r(\lap_k) \f{U}_k x_2) &= (\f{V}_k z, \grf).\\
\end{array}
\end{equation}
The well-posedness, stability and convergence of (\ref{eqn:polylap_recursive_weakform}) depend upon the choice of $\f{V}_k, \f{U}_k$ - interpolants onto \emph{test} and \emph{trial} spaces - and $r$, as well as the quality of the discretization, $M$, of $\man{M}$ \footnote{coercivity is granted by a Poincar\'e-Friedrichs-type inequality over shape-regular $M$, boundedness is granted by the choice of $\f{U}_k, \f{V}_k$ with the Lax-Milgram lemma then guaranteeing a unique solution to (\ref{eqn:polylap_recursive_weakform})}. In \cite{AFW09}, a case of $r=1, \gra = 2$ is solved in the more stable mixed-formulation over shape-regular meshes, with interpolants, $\f{V}_k = \f{U}_k$, onto polynomial spaces that are \emph{bounded cochain projections} \cite{AFW06} of $\grL^k (M)$. 

When $\gra \in 2 \nat + 1$, however, one arrives at a fractional base-case, 
\[ (p_r (\lap_k) )^{1/2} u_1 = \grf, \]
weakly-posed over $M$ using the self-adjointness of $p_r(\lap_k)^{1/2}$ as 
\begin{equation}\label{eqn:protobase_weakform} 
(p_r (\lap_k)^{1/2} v,  p_r (\lap_k)^{1/2} u_1) = (v, p_r (\lap_k) u_1) = (v,  p_r (\lap_k)^{1/2}\grf) \;\; \forall v \in \Range(\f{V}_k), 
\end{equation}
resulting in the weak solution, $u_1 := \f{U}_k x_1, x_1 \in \reals{n_k}$, with 
\begin{equation}\label{eq:proto_base}
	\f{V}_k ^* p_r (\lap_k) \f{U}_k x_1 = \f{V}_k ^* p_r (\lap_k)^{1/2} \grf.
\end{equation}

This is especially convenient when $\grf = \c{W}_k$ is the $k$-form white noise over $\man{M}$ [cite defn] and (\ref{eq:proto_base}) is solved in the mean-square sense for the discrete covariance of $x_1$, as 
\[ \begin{array}{ll}
		\mean{x_1 x_1 ^*} &= \inv{(\f{V}_k ^* p_r (\lap_k) \f{U}_k)} \f{V}_k ^* p_r (\lap_k)^{1/2} \mean{\c{W}_k \c{W}_k ^*} p_r(\lap_k)^{1/2} \f{U}_k \inv{(\f{V}_k ^* p_r (\lap_k) \f{U}_k)} \\
		&=   \inv{(\f{V}_k ^* p_r (\lap_k) \f{U}_k)} \f{V}_k ^* p_r (\lap_k) \f{U}_k \inv{(\f{V}_k ^* p_r (\lap_k) \f{U}_k)} \quad \because \mean{\c{W}_k \c{W}_k ^*} = id \\  
	&= \inv{(\f{V}_k ^* p_r (\lap_k) \f{U}_k)}, 
\end{array} \]
whence the precision matrix is simply 
\begin{equation} \label{eqn:protobase_precision}
\inv{\mean{x_1 x_1 ^* }} =   \f{V}_k ^* p_r (\lap_k) \f{U}_k =: \f{K}_k. 
\end{equation}
This requires no approximation of the fractional operator, $p_r(\lap_k)^{1/2}$ directly.

Now, (\ref{eq:polylap_recursive}) results in
\[ \begin{array}{ll} 
		\mean{x_{t} x_{t} ^*} &= \inv{(\f{V}_k ^* p_r (\lap_k) \f{U}_k)} \f{V}_k ^* \mean{u_{t-2} u_{t-2} ^*} \f{V}_k \inv{(\f{U}_k ^* p_r (\lap_k) \f{V}_k )} \\
		&= \inv{\f{K}_k} \f{V}_k ^* \f{U}_k \mean{x_{t-2} x_{t-2} ^* }\f{U}_k ^* \f{V}_k \( \f{K}_k ^{*}\) ^{-1}. 
\end{array}
\]
whence with $\f{M}_k := \f{V}_k ^* \f{U}_k$, 
\begin{equation}\label{eq:proto_cov}
		\begin{array}{rl}
			\mean{x_{\gra} x_{\gra} ^*}^{-1} =: C_{\gra} &= \( \inv{\f{M}_k} \f{K}_k \)^* C_{\gra-2} \( \inv{\f{M}_k} \f{K}_k \) \\
			C_2 &= 	\(\inv{\f{M}_k} \f{K}_k\)^* \(\inv{\f{M}_k} \f{K}_k \)\\
C_1 &= \f{K}_k.
\end{array}
\end{equation}

The work in \cite{rue11} now immediately follows as a special case of $r=1, k=0$, with $p_1(\lap) = \grk^2 + \lap$, and $\f{V}_0 = \f{U}_0 = W_0$, the \emph{Whitney map}, with $\f{K}_0$ being the symmetric FEM stiffness matrix and $\f{M}_0$ being the consistent mass-matrix.

Other discrete and convergent approximations may be obtained for other choices of $\f{V}_k, \f{U}_k$. Over polytopal meshes shape-constrained to bound the gradient of the Wachpress coordinates \cite{GradBoundsGBCFloaterGillette14}- a \emph{generalized barycentric coordinate} - in norm, with $k=0$, $\f{U}_0 = \f{V}_0$ may be set as the Generalized Whitney Interpolant, $W_0$, \cite{DiffFormsPolytopeChristiansen08} instantiated by the Wachpress coordinates to obtain a convergent approximation when $r=0$ in the deterministic case.

Here we show that another, particularly simple, approximation may be obtained for $k \in \{0, m\}$ that foregoes the recursive phrasing of (\ref{eq:polylap_recursive}), (\ref{eqn:polylap_recursive_weakform}) and allows a \emph{universal approximation scheme} for solving (\ref{eqn:man_spde}) in the mean-square sense agnostic to $\gra, \grk$ for many manifolds of arbitrary dimension. We also show that the covariances of simplicially averaged (smoothed) gaussians solving (\ref{eqn:man_spde}) shall also be approximated with the same approximation scheme. We shall further show that, with a little additional mesh-regularity, such an approximation scheme may be computed very quickly using existing solvers.

Specifically, we show:

\input{thm.main.2}

We close this section with an anticipatory remark.

\begin{remark}\label{rmk:weakform_interp}
	The weak-form posed in (\ref{eqn:protobase_weakform}) is considered well-defined even if $p_r (\lap)^{1/2} v$ is \emph{not well-defined}, if (\ref{eqn:protobase_weakform}) may be posed using the properties of $p_r(\lap), \f{V}_k, \f{U}_k$ in some form that is well-defined, e.g. (\ref{eq:proto_base}), (\ref{eqn:protobase_precision}). This notion is consistent with the Galerkin form being well-defined for piecewise linear functions, and (\ref{eqn:man_spde}) being well-defined for random-processes.
\end{remark}

%% file: thm.main.2.tex
\begin{thm}[Universal Approximation] \label{thm:main}
	Let $M_h$ be a well-centered $s$-relative-volume-concentrated \footnote{See \S\ref{sec:Approx}} mesh of $\man{M}$ for $s < 1/2$. Then there exists a $k$-dimensional subspace, $\c{S}_k \subset \reals{n}$ such that the two-parameter family of covariance matrices, $C^{\gra, \grk} = \Cov(x)$, from $\f{U} x$ solving (\ref{eqn:man_spde}) over $M_h$ are approximated with relative error, $\gre < 1$, with a single projection, $\tilde{\grP}_k$, onto $\c{S}_k$, with $k=O(s/\gre)$:
	\[  \norm{(I -  \tilde{\grP}_k) C^{\gra, \grk}} \leq \gre \norm{C^{\gra, \grk}} \; \; \forall \gra \in \nat, \grk \in \real.  \]
	Similarly, if $\bar{x}$ is the simplicial-mean over $M_h$ of $\f{U} x$ solving (\ref{eqn:man_spde}), with $\bar{C}^{\gra, \grk} = \Cov(\bar{x})$, then there exists a $k$-dimensional subspace $\bar{\c{S}}_k \subset \reals{n_m}$ such that $\bar{C}^{\gra, \grk}$ is approximated with relative error $\gre' < 1$ with a single projection, $\tilde{grP}' _k$ onto $\bar{\c{S}}_k$ with $k=O(s/\gre')$:
	\[  \norm{(I -  \tilde{\grP}'_k) \bar{C}^{\gra, \grk}} \leq \gre' \norm{\bar{C}^{\gra, \grk}} \; \; \forall \gra \in \nat, \grk \in \real.  \]
	Furthermore, an orthonormal basis, $V$, for $\c{S}_k$ can be deterministically computed in $O(n_1 k + n_0 k^2 + k^3)$, and $\bar{V}$ for $\bar{\c{S}}_k$ can be computed in $O(n_1 k + n_m k^2 + k^3)$, where $n_i$ is the number of $i$-faces (see \S\ref{sec:ectools}) in $M_h$. 
\end{thm}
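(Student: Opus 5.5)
The plan is to reduce the whole $(\gra,\grk)$-family to spectral functions of a single symmetric discrete operator, and then take $\c{S}_k$ to be one of its spectral subspaces. For $k=0$ (and dually $k=m$) I will use the DEC choices $\f{U}_0 = W_0$ (Whitney) and $\f{V}_0$ the dual-cell interpolant. By the well-centered DEC construction recalled in \S\ref{sec:ectools}, the mass matrix $\f{M}_0 = \f{V}_0^*\f{U}_0$ is then the diagonal Hodge star $\dstar_0$, i.e.\ the lumped dual volumes. The stiffness matrix is $\f{K}_0 = \grk^2 \dstar_0 + \dd_0^T \dstar_1 \dd_0$. Substituting these into (\ref{eq:proto_cov}) shows that every precision matrix $C_\gra$ is a polynomial in
\[ A := \dstar_0^{-1/2}\, \dd_0^T \dstar_1 \dd_0\, \dstar_0^{-1/2}, \]
conjugated by $\dstar_0^{1/2}$. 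Up to that fixed diagonal congruence, each covariance $C^{\gra,\grk}$ is therefore $f_{\gra,\grk}(A) = (\grk^2 + A)^{-\gra}$. The function $f_{\gra,\grk}$ is positive and monotonically decreasing on $\mathrm{spec}(A)\subset[0,\infty)$, and $A$ itself does not depend on $\gra$ or $\grk$.

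Next I will use $s$-relative-volume-concentration to make $\dstar_0$ close to a scalar multiple of the identity: relative deviation at most $s<1/2$. Then $C^{\gra,\grk}$ and $f_{\gra,\grk}(A)$ (rescaled) agree up to a multiplicative perturbation of size $O(s)$. I take $\c{S}_k$ to be the span of the eigenvectors of $A$ for its $k$ smallest eigenvalues. These eigenvectors are the same for all $(\gra,\grk)$. Because $f_{\gra,\grk}$ is decreasing, this subspace is exactly the top-$k$ spectral subspace of $f_{\gra,\grk}(A)$ simultaneously for all $(\gra,\grk)$, so a single projection $\tilde\grP_k$ serves the whole family.

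The relative error then splits into two terms. The first is the spectral tail $f(\grl_{k+1})/f(\grl_1)$. The second is a Davis–Kahan-type perturbation term coming from the diagonal congruence, which is $O(s)$ and is controlled because $s<1/2$ keeps $\dstar_0^{\pm 1/2}$ uniformly bounded. To obtain $k=O(s/\gre)$, I plan to exploit that $\grl_1=0$ (the constants, since $\man{M}$ is boundaryless and $M_h$ is connected) and to bound the number of eigenvalues of $A$ falling below the threshold at which the tail drops beneath $\gre$.

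This counting step is the main obstacle. A naive tail bound is not uniform in $\grk$: as $\grk\to\infty$, $f_{\gra,\grk}$ becomes flat and the spectrum no longer decays. I would handle this by separating the near-scalar part $\grk^{-2\gra}I$ of the covariance, which any projection leaves with relative error controlled by the volume deviation $s$. What remains is a residual that is uniformly small off the low spectrum. Balancing the two terms should give $\gre \gtrsim s/k$, and hence $k=O(s/\gre)$. The smoothed statement for $\bar x$ follows by the same argument with $k=m$: the simplicial mean is the $m$-cochain de Rham map, $\dstar_m$ is again diagonal, and $A$ is replaced by the dual-graph operator $\dstar_m^{-1/2}\dd_{m-1}\dstar_{m-1}^{-1}\dd_{m-1}^T\dstar_m^{-1/2}$ on $\reals{n_m}$.

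For the complexity claim, $V$ will be computed by a deterministic block Lanczos (or subspace iteration) method on the sparse $A$. One application of $A$ costs $O(n_1)$, since $\dd_0$ has two nonzeros per edge. Each iteration step therefore costs $O(n_1 k)$ for the matrix–block products plus $O(n_0 k^2)$ for reorthogonalization. A final $k\times k$ eigensolve adds $O(k^3)$. With a number of iterations independent of $n$ at fixed tolerance, this gives $O(n_1 k + n_0 k^2 + k^3)$, and the dual case gives $O(n_1 k + n_m k^2 + k^3)$ in the same way.
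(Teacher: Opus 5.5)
\textbf{The uniformity-in-$\grk$ step fails.} Your reduction to a single $(\gra,\grk)$-independent operator plays the role of the paper's parameter-agnosticity step (Corollary~\ref{corl:param_agnostic}). The step you flag as the main obstacle, however, fails, and no argument can rescue it as stated.

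A rank-$k$ orthogonal projection with $k<n$ cannot absorb a multiple of the identity: $\norm{(I-\tilde{\grP}_k)\,cI}=\abs{c}$. So the near-scalar part $\grk^{-2\gra}I$ contributes relative error of order one, not $O(s)$. Concretely, for the discrete precision of Theorem~\ref{thm:decgmrf} with $P=\grk^2 I+\dlap_0$, one has $\grk^{2\gra}C^{\gra,\grk}=I+O(\grk^{-2})$. Hence $\norm{(I-\tilde{\grP}_k)C^{\gra,\grk}}/\norm{C^{\gra,\grk}}\to 1$ as $\grk\to\infty$, for every fixed $k<n$. In your congruence model, $\grk^{2\gra}C^{\gra,\grk}\to\dstar_0^{-1}$, and the limiting ratio is at least $\min_i\dstar_0(i,i)/\max_i\dstar_0(i,i)$. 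So for any $\gre<1$ (resp.\ any $\gre$ below that volume ratio) the displayed bound fails for large $\grk$. Your balancing ``$\gre$ of order $s/k$'' has no derivation behind it.

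The paper's perturbation mechanism actually runs the other way in $k$. Theorem~\ref{thm:eigspace_approx} gives $\norm{\grP_k-\tilde{\grP}_k}_2=O(ks)$, because Li's bound (Lemma~\ref{lma:li_bound}) carries a factor $k$; this is why $\mathtt{UniversalBasis}$ takes $k\propto\gre/s$. What the paper's argument supplies is only two things: (i) $(\gra,\grk)$-invariance of the eigenprojections, and (ii) this alignment between the bottom eigenspaces of $\bar{\dlap}_0=\dstar\bar{L}$ and of the graph Laplacian $\bar{L}$. The spectral tail is left to informal decay considerations. A provable version must restrict $\grk$ relative to the low eigenvalues, or measure only subspace alignment.

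\textbf{The perturbation step also has a gap.} $s$-relative-volume-concentration (Definition~\ref{def:rvc}) bounds only the sample variance of the normalized $m$-simplex volumes. It does not give a pointwise bound ``relative deviation at most $s$'': a fraction $O(s^2)$ of cells may deviate by $O(1)$. Nor does it control $\dstar_0$ at all. Its entries are dual-cell volumes, which can differ by a fixed fraction between vertices of different valence even when all $m$-simplices have nearly equal volume. Theorem~\ref{thm:eigspace_approx} is accordingly posed for $\bar{\dlap}_0=\dstar\bar{L}$ on $M'$, where the diagonal is exactly $1/\abs{\grs_i}$.

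Even there, a Davis--Kahan bound in terms of $\norm{E}_2$ gives nothing of order $s$, since eigenvectors localized on the deviating cells can rotate by $O(1)$; it also needs an eigengap you do not control. The ingredient you are missing is Assumption~\ref{ass:laplacian_incoherence}: the bottom-$k$ eigenvectors of the graph Laplacian are $\ell_\infty$-delocalized, $\norm{y}_\infty^2=O(1/n)$. This turns the variance bound into $\norm{EV_k}_2^2\leq\norm{y}_\infty^2\sum_i E_{ii}^2=O(s^2)$. The paper feeds this, together with the pointwise bound $\gre_s<1/2$ of (\ref{constr:vol_bounded}), into Li's multiplicative perturbation bound with $D=\grm_m\dstar$.

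\textbf{Two smaller points.} First, your exact congruence $C^{\gra,\grk}=\dstar_0^{-1/2}(\grk^2+A)^{-\gra}\dstar_0^{-1/2}$ holds for the symmetric recursion $\f{K}\f{M}^{-1}\f{K}\cdots$. It does not hold for (\ref{eq:proto_cov}) or Theorem~\ref{thm:decgmrf}. There, with $P=\grk^2I+\dstar_0^{-1}L$ and $L=\dd_0^{\top}\dstar_1\dd_0$, one gets $(P^{a})^{\top}P^{b}=\dstar_0^{1/2}(\grk^2+A)^{a}\dstar_0^{-1}(\grk^2+A)^{b}\dstar_0^{1/2}$. So in the paper's setting even the common eigenbasis is only approximate.

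Second, unpreconditioned block Lanczos for the bottom of a mesh Laplacian needs an iteration count that grows with $n$, because the relative gap scales like $h^2$. Your $n$-independent count is therefore unjustified; Proposition~\ref{thm:algoUniversalBasis} keeps an explicit $1/\sqrt{\bar{\gre}}$ factor.
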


%% file: S.ECTools.tex
\section{Tools from Discrete Exterior Calculus} \label{sec:ectools}
We assume familiarity with riemannian manifolds and exterior calculus. We point the reader to \cite{AMR88} for the continuous theory, \cite{whitney57geoint} for the theory of integration and the De Rham map which acts as a bridge to the discrete and will model the measurement-operator, to \cite{Hirani03, Desbrun03, Kalyanaraman16Hodge} for the theory of DEC, to \cite{AFWBook18} for the theory of FEEC, and to \cite{CraneDDG, CraneEC23} for a concise visual reconciliation of the continuous and the discrete. 

Here, we provide a contextually relevant summary of necessary topics, including the recent proof of convergence of the DEC by \cite{GP26}, and develop tools for use in \S\ref{sec:Approx} where we obtain a universal approximant. 

\subsection{$m$-plicial Complexes}
Let $\man{M}, \grm, M$ be as in Notation \ref{nota:basicManifold}. An $m$-simplex shall simply be called an \emph{$m$-plex}. An $m$-plex is said to be \emph{well-centered} if its circumcenter lies strictly within it. An $m$-plicial complex, $M$, is well-centered if all its constituent $k$-plices are, $\forall k \leq m$. $M$ is additionally \emph{shape-regular} if the ratio of circumradius to inradius is bounded across all constituent $k$-plices for all $k$. 

\begin{assume} \label{ass:goodComplex}
$M$ is a well-centered, shape-regular triangulation of $\man{M}$.
\end{assume}

\subsection{Chains, Cochains, Exterior Derivative and Co-Derivative}
The evaluation pairing of a $k$-plex, $\grs_k \in M$, with a $k$-form, $\grw_k \in \grL^k (M)$, is
\[ \< \grs_k, \grw_k \> = \int_{\grs_k} \grw_k = (\c{R}_k (\grw_k)) [\grs_k]. \]
The evaluation-map given above is the \emph{De Rham Map}, $\c{R}_k$ of order $k$. $\c{C}_k$ is the space of $k$-chains over $M$ - formal linear combinations of $k$-plices in $M$ - and $\c{C}^k$ it dual-space of $k$-cochains, both isomorphic to $\reals{n_k}$. $\partial_k: \c{C}_k \to \c{C}_{k-1}$ is the boundary-operator mapping an oriented $k$-plex to its oriented $(k-1)$-plicial boundary. As in the continuous case, its dual with respect to the evaluation-pairing, $\< \grs_k, f \>$, is the DEC exterior derivative \cite{AMR88}, $\dd_k: \c{C}^k \to \c{C}^{k+1}$: $\< \partial_{k+1} \grs_{k+1}, y_k \> = \< \grs_k, \dd_k y_k \>$ for $y_k \in \c{C}^k$.

As in the continuous case, $\{ \c{C}^k, 0 \leq k \leq m \}$ forms a discrete \emph{De Rham complex} with $\dd: \c{C}^k \to \c{C}^{k+1}$. $\c{R}$ shall be used to denote the De Rham map onto this complex where necessary, with context implying $k$.

The De Rham complex forms a \emph{graded}\footnote{The 'grade' being the form-degree, $k$} vector space and is equipped with an inner-product, $(u, v): \c{C}^k \times \c{C}^k \to \real$. The \emph{codifferential}, $\dad$, is the adjoint of $\dd$:
\[ (u, dv)_{k+1} = (\dad u, v)_k. \]

\subsection{Interpolation Maps}
The notion of an interpolant is fundamental. It is (re)defined here in a contextually convenient manner.
\begin{defn}[Interpolation Map/Interpolant] \label{defn:interpolationMap}
	A map, $\c{I}_k: \c{C}^k \to  \grL^k M$, is said to be an interpolation map if $\Dim (\c{I}_k (\c{C}^k)) = \Dim (\c{C}^k)$. It is $t$-continuous if its range is a subset of $C^t \grL^k M$.
\end{defn}

\begin{remark}\label{rmk:interpolationMap}
	Under Definition \ref{defn:interpolationMap}, we have that a map $\c{C}^k \to C^0 \grL^{k}M_{q+k}$ may be an interpolant for $q \geq 0$. Specifically, $\c{R}^*$ is an interpolant in this definition. 
\end{remark}

We say that $\c{I}_k$ \emph{respects the De Rham map} if $\c{R}_k \c{I}_k = I_{n_k}$. In the remainder of this work, we assume
\begin{assume} \label{ass:respectDeRham}
	For $0 \leq k \leq m$, the interpolation maps, $\c{I}_k: \reals{n_k} \to C \grL^k (X)$ respect the De Rham map, and $\{ \c{I}_k \c{R}_k \}$ forms a bounded cochain projection of $\grL^k \man{M}$ \cite{AFW06}. 
\end{assume}

The Whitney and Generalized Whitney interpolants \cite{DiffFormsPolytopeChristiansen08} satisfy Assumption \ref{ass:respectDeRham}, as do Generalized Barycentric Coordinates \cite{Floater15}, as a special case.

\subsection{Circumcentric Dual}
The polytopal \emph{circumcentric dual}, $M'$, of $M$, is a collection of all $\astar \grs, \grs \in M$, where $\astar$ associates each $k$-plex, $\grs_k$, in $M$ with an orthogonal $(m-k)$-cell (dual-cell) \footnote{uniquely defined by the circumcenters of all $t$-plices in $M$ containing $\grs_k$ for $t\geq k$}, $\grp_{m-k} = \astar \grs_k$. The circumcentric cell-dual operator, $\astar$, is an automorphism modulo orientation, reversal of which is denoted by $-1$: 
\[ \astar \astar \grs_k = (-1)^{k(m-k)} \grs_k. \]
$M'$ is a polytopal complex in general, and we have \footnote{modulo orientations of simplices, which agree with $\star \star$ and hence leave the space of $k$-cochains invariant} $(M')' \equiv M$.

\begin{notation} \label{nota:genCell}
 Let $\bar{\c{C}}_k, \bar{\c{C}}^k$ denote the space of $k$-chains and $k$-cochains over $M'$, and $\bar{\c{R}}_k, \bar{\c{I}}_k$ denote the De Rham map onto $M'$ and the interpolation map from $\reals{\bar{n}_k}$ respectively. 
Let $X \in \{ M, M' \}$ denote the generic cell-complex with $X'$ well-defined. Let $\grt_k \in X$  denote the generic $k$-cell in $X$. Furthermore, $\c{R}_k, \c{I}_k, \c{C}_k, \c{C}^k$ shall all denote the relevant objects over $X$, with $\c{R}_k ', \c{I}_k ', \c{C}' _k, \c{C}^{'k}$ denoting objects in $X'$, as shall be made evident by the context.  
\end{notation}

 Similar to $\{\c{C}^k, 0 \leq k \leq m\}$, a discrete De Rham complex is formed by $\{ \bar{\c{C}}^k, 0 \leq k \leq m \}$ with $\bar{\dd}: \bar{\c{C}}^k \to \bar{\c{C}}^{k+1}$. 

\subsection{DEC Hodge Star}
We recall that the continuous hodge-star, $\star: \grL^k T_z \to \grL^{m-k} T_z$, is a pointwise isometry for $z \in \man{M}$, across $\grm$-orthogonal-complements of $\grL^k T_z$ leaving coefficients in the Pl\"ucker coordinates invariant. 

The orthogonal relationship between $M, M'$ serves to mimic this in the discrete setting and in the average sense: where $(\perp_{\grm} \grw_k, \star \grf_k(z)) = (\grw_k, \grf_k(z))$ for all $k$-vectors, $\grw_k \in \grL^k T_z$ and their (positively oriented) $\grm_z$-orthonormal complement $(m-k)$-vectors, $\perp_{\grm} \grw_k \in \grL^{m-k} T_z$, one has in the discrete case\footnote{where $M$ may be thought of as $\man{M}$ countably ``flattened'' by ``impressing'' a countable collection of its tangent-spaces, $T_{z_i}$, into it, and recovered in the limit where this flattening becomes uncountable.} for $\grt_k \in X$ (as in Notation \ref{nota:genCell}): 
\begin{equation} \label{eqn:dstar}
	\frac{f_{ki} = \<\grt_{k, i}, \grf_k\>}{\abs{\grt_{k, i}}} = \frac{(\dstar f_k)_i \stackrel{(a)}{\asymp} \<\astar \grt_{k,i}, \star \grf_k\>}{\abs{\astar \grt_{k,i}}},
\end{equation} 
leading to the definition of $\dstar$: 
\begin{defn}[Discrete Hodge Star]
	Using Notation \ref{nota:genCell}, the discrete hodge star, $\dstar: \c{C}^k \to \c{C}^{'(m-k)}$, is defined on the $k$-cochains of $X$ as the $n_k \times n_k$ diagonal matrix with
	\[ \dstar_k (i, i) := \frac{\abs{\astar \grt_{k, i}}}{\abs{\grt_{k, i}}}. \]
\end{defn}
The \emph{shape-regularity} constraint on $M$ ensures that $\dstar$ is well-conditioned.
The asymptotic equality, $(a)$, in (\ref{eqn:dstar}) is not a prerequisite in the definition of $\dstar$ \cite{Hirani03, Desbrun03}. Its presentation here serves to define the \emph{discrete-hodge-consistency} of an interpolant:

\subsection{Discrete-Hodge-Consistency}
\begin{defn}[Discrete-Hodge-Consistency]
	For $X_h \in \{ M_h, M' _h \}$, a $k$-form, $\grf_k \in C \grL^k (X_h)$ is said to be discrete-hodge-consistent if $(a)$ in (\ref{eqn:dstar}) is an exact equality modulo $O(h)$:
	\begin{equation} \label{eqn:discreteHodgeConsistency}
		\c{R}' _{m-k} \star \grf_k = (\dstar + O(h)) \c{R}_k \grf_k \heq \dstar \c{R}_k \grf_k.
	\end{equation}
	An interpolant-map, $\c{I}_k : \reals{n_k} \to C \grL^k (M_h)$, is said to be discrete-hodge-consistent if it has a discrete-hodge-consistent basis. 
\end{defn}

Discrete-hodge-consistency qualifies the mimesis of the continuous $\star$, where DEC readily reconciles with FEEC. For instance, as shown in \cite{GP26, TS16DEC},

\begin{remark} \label{rmk:DiscreteHodgeConsistency}
Whitney interpolants over well-centered, shape-regular simplicial complexes and Generalized Whitney interpolants over their circumcentric duals are both discrete-hodge-consistent.
\end{remark} 

This follows from an observation about forms over cells containing both $\grs_{k, i}, \astar \grs_{k, i}$ (made in \S$5.2$ in \cite{GP26}). We summarize this briefly and refer the reader to the appendix for more detail.

\begin{defn}[Flange]
	The geodesic convex hull in $M$ of $\grs_{k, i} \cup \astar \grs_{k, i}$ is called the flange, $\flange{k, i}$, of $\grs_{k, i}$ (or the dual-flange of $\astar \grs_{k, i}$). In general, it is called a $k$-flange (or an $(m-k)$-dual-flange). $F_k:= \{ \flange{k, i} \}$ is the $k$-flange-complex (or $(m-k)$-dual-flange-complex) of $M$.
\end{defn}
\begin{remark}\label{rmk:flange}
	A $k$-flange is $m$-dimensional for all $k$. It is called a $k$-flange since it is uniquely defined in $M$ by $\grs_{k, i}$ (or $\grp_{m-k, i}$).
\end{remark}
The $m$-cells in $M, M', F_0, \cdots, F_m$, all arise from different unions of $m$-plices in the circumcentric subdivision of $M$ \footnote{Similar to a barycentric subdivision \cite{AMR88} but with circumcenters.}. 
A flange is called the \emph{diamond-cell} in \cite{GP26}. All constant $k$-forms on a flange clearly have the property that $(a)$ in (\ref{eqn:dstar}) is an \emph{exact} equality for all $h$, and all forms they approximate with relative error $O(h)$ are discrete-hodge-consistent. Since the flange is convex it is star-shaped and we have, by Bramble-Hilbert lemma applied to $k$-forms [cite], that constant forms, $\bar{\grw}_{\flange{}}$, approximate smooth forms, $\grw$, with $O(h)$ relative error. So the smooth form, $\grw$, is discrete-hodge-consistent over well-centered, shape-regular meshes;  $(a)$ in (\ref{eqn:dstar}) holds and $\dstar$ convergently mimics $\star$ \footnote{One is still required to show that certain non-smooth forms are discrete-hodge-consistent to use linear interpolants to solve a Hodge-Poisson problem. This can be found in the appendix in this version or later versions, as well as in \cite{GP26}.}
One may show that the coderivative is preserved up to $O(h)$:
\input{lma.approx_dstar.claim}

This is instrumental in defining the operator central to a Hodge-Poisson problem: the DEC Hodge-Laplacian. 

\subsection{DEC Hodge-Laplacian}

\begin{defn}[DEC Hodge-Laplacian] \label{defn:hodge_laplacian}
	The DEC hodge-laplacian, $\dlap_k$, of order $k$ over $M$ is
	\[ \dlap_k = \dd_{k-1} \bar{\dstar}_{m-k+1} \bar{\dd}_{m-k} \dstar_k + \bar{\dstar}_{m-k} \bar{\dd}_{m-k-1} \dstar_{k+1} \dd_k = \dd \bar{\dstar} \bar{\dd} \dstar + \bar{\dstar} \bar{\dd} \dstar \dd \]
	and its equivalent, $\bar{\dlap}_k$, over $M'$ is
	\[ \bar{\dlap}_k = \bar{\dd} \dstar \dd \bar{\dstar} + \dstar \dd \bar{\dstar} \bar{\dd}.\] 
	The form-order shall be contextually implied when $\bar{\dstar}, \bar{\dd}, \dstar, \dd$ are used. 
\end{defn}

\begin{notation}\label{nota:hodgeLaplacian}
	Extending Notation \ref{nota:genCell}, when $X \in \{M, M'\}$ denotes the generic complex, $\dlap = \dd \dstar \dd \dstar + \dstar \dd \dstar \dd$ shall denote the DEC hodge-laplacian on $\c{C}^k(X)$, with $\dlap'$ denoting the DEC hodge-laplacian on $\c{C}^{'k}$. 
\end{notation}


\subsection{Distributional Interpretation of Discrete Exterior Calculus}
DEC is posed here using the theory of distributions, as hinted by Remark \ref{rmk:weakform_interp}, to obviate the closeness to the approach of \cite{rue11, rue22}: with Notation \ref{nota:basicManifold} one sees that the De Rham map, $\c{R}_k$, is linear and
\[ \c{R}_k (\grf) [i] = (e_i, \c{R}_k \grf) =  \< \grs_{ki}, \grf \>,    \]
whence
\begin{equation}\label{eqn:distributionalDR}  
(\c{R}_k ^* e_i, \grf) = \< \grs_{ki}, \grf \>.  
 \end{equation}
Clearly, $\c{R}_k ^* e_i$ is the distribution that is $1$ on $\grs_{ki}$ and $0$ everywhere else. As a result, one may consider $\c{R}_k ^* : \reals{n_k} \to \reals{\man{M}}$ to be an interpolation map, albeit a trivial one, and $Range(\c{R}_k ^*)$ to be the test-space. We may thus restate (\ref{eq:polylap_recursive}) as 
\begin{equation}\label{eqn:polylap_recursive_DEC}
	(\c{R}_k ^* z , p_r(\lap) \c{I}_k x_t) = (\c{R}_k ^* z, \c{I}_k x_{t-2}) \Rightarrow \c{R}_k p_r(\lap) \c{I}_k x_t = x_{t-2}.
\end{equation}
\begin{remark} \label{rmk:distributionalDEC}
This distributional phrasing uses the "observable" space of functions/forms - over the relevant $k$-plices - to test the weak formulation as opposed to interpolating a test-space. Since DEC is shown to have $O(h)$-convergence, little is lost in this process.
\end{remark}
The construction of DEC now provides the following.

\input{lma.prd}

\begin{remark} \label{rmk:vemLike}
As $h \to 0$, $\c{R}_k p_r (\lap_k) \c{I}_k = p_r(\dlap_k) + O(h) \to p_r(\dlap_k)$ as in VEM, which avoids the explicit computation of $\f{V}^* _0 \lap_0 \f{U}_0$ precisely by providing a stable approximation that is $O(h)$ away from the exact mass-normalized stiffness matrix, and converges to this normalized stiffness matrix at the same rate that the normalized stiffness matrix converges to the Laplace-Beltrami operator. 
\end{remark}
With the convergent, discrete analogue for $p_r (\lap_k)$ developed, we establish a convergent solution to equations such as $(\ref{eqn:man_spde})$:  
Suppose 
\[ p_r (\lap_k) u = \grf \]
is discretized as 
\[ (p_r (\dlap_k) + O(h)) x = \c{R}_k \grf = f. \] 
\input{prop.prd_solns_converge}
We have shown that not much is lost by omitting the $O(h)$ term from $p_r (\dlap_k) + O(h)$. So we may henceforth assume\footnote{A self-contained convergence-result for $\c{R} \lap_k \c{I}$ may be made available here in later versions, from which Proposition \ref{prop:prd_soln_convergence} gives the result here by the triangle inequality}  
that $\c{R} p_r (\lap_k) \c{I}$ is discretized as $p_r (\dlap_k)$, following the results of \cite{GP26}, which the following proposition does. 
\input{prop.prd_soln_convergence}

\input{lma.dual_dlap}

\input{corl.mto0_dlap}

We now state and prove the main result of this section.
\input{thm.decgmrf}

We observe that the cell-wise means\footnote{This notion is rigorously defined via a homeomorphism from $M \to \c{M}$ that maps $\grs_k$ to its geodesic equivalent, and pulling the De Rham map back to $M$} of $u$ over $X_m \in \{ M, M' \}$ solve the $p_r (\dlap' _0)$ over $X' _0$; such smoothed approximations are convergent. We shall see in the following section that, as a result, one may have an eigendecomposition for smoothed approximations to GMRFs over such cells.

%% file: lma.approx_dstar.claim.tex
\begin{lemma}\label{lma:approx_dstar}
	Let $\c{I}_k$ be an interpolant satisfying Assumption \ref{ass:respectDeRham}. Then 
	\[ \c{R}_k \grd \c{I}_{k+1} = \c{R} _{k} \star d \star \c{I}_{k+1} \heq \dstar \dd \dstar. \]
\end{lemma}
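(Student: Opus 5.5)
We prove Lemma \ref{lma:approx_dstar} by writing the continuous codifferential as $\grd = \pm \star d \star$ and then passing each of the three factors to its discrete counterpart, one at a time, moving from right to left. On $k+1$-forms over an $m$-manifold, $\grd = (-1)^{m(k+1)+1}\star d \star$. The sign is absorbed by the orientation convention $\astar\astar\grs_k = (-1)^{k(m-k)}\grs_k$ on the dual complex. This gives the first equality $\c{R}_k \grd \c{I}_{k+1} = \c{R}_k \star d \star \c{I}_{k+1}$ modulo orientation.

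\textbf{Step 1 (inner star).} For $y \in \c{C}^{k+1}$, the form $\grw = \c{I}_{k+1} y$ lies in the range of a discrete-hodge-consistent interpolant, by Remark \ref{rmk:DiscreteHodgeConsistency} and Assumption \ref{ass:respectDeRham}. Hence, by (\ref{eqn:discreteHodgeConsistency}),
\[ \c{R}'_{m-k-1} \star \grw \heq \dstar \c{R}_{k+1}\grw = \dstar y, \]
where the last equality uses $\c{R}_{k+1}\c{I}_{k+1} = I$.

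\textbf{Step 2 (exterior derivative).} Stokes' theorem, in the form of the duality $\< \partial \grt, \cdot\> = \<\grt, d\,\cdot\>$ on $M'$, gives exactly $\c{R}' d = \bar{\dd}\c{R}'$. Therefore
\[ \c{R}'_{m-k} d\star\grw = \bar{\dd}\,\c{R}'_{m-k-1}\star\grw \heq \bar{\dd}\dstar y. \]

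\textbf{Step 3 (outer star).} We apply (\ref{eqn:discreteHodgeConsistency}) again, now with the roles of $M$ and $M'$ swapped (Notation \ref{nota:genCell}). The $(m-k)$-form $\eta = d\star\grw$ is flange-wise close to a constant form, so $\c{R}_k\star\eta \heq \bar{\dstar}\c{R}'_{m-k}\eta$. Combining the three steps gives $\c{R}_k\grd\c{I}_{k+1} y \heq \bar{\dstar}\bar{\dd}\dstar y$, which is the claimed $\dstar\dd\dstar$ in the generic notation.

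\textbf{Main obstacle: Step 3.} Discrete-hodge-consistency is asserted for interpolants and for smooth forms. However, $\eta = d\star\c{I}_{k+1}y$ is neither: it is in general only piecewise smooth across the cells of $M$, and it is not a form in the range of an interpolant on $M'$. Two things must be controlled. The first is that the $O(h)$ relative errors from Steps 1 and 3 are not amplified when they pass through $\bar{\dd}$, which scales like $h^{-1}$. My first attempt is to use the bounded cochain projection property of $\c{I}\c{R}$: replace $\star\grw$ by $\bar{\c{I}}\bar{\c{R}}\star\grw$, which commutes with $d$. This reduces Step 3 to the discrete-hodge-consistency of $\bar{\c{I}}$ on $M'$, which holds for Generalized Whitney interpolants. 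The difference is then bounded via the Bramble--Hilbert argument on each flange, which yields $O(h)$ relative error on flange averages. The second is to interpret $\heq$ in the relative sense. Then the factor $h^{-1}$ from $\bar{\dd}$ is matched by the same factor in both sides, and the claim closes as an $O(h)$-relative identity.
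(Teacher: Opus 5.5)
Your three steps follow the paper's route. The paper gives no separate proof of Lemma~\ref{lma:approx_dstar}, but the computation in the proof of Lemma~\ref{lma:prd} alternates $\c{R}\star=(\dstar+O(h))\c{R}'$ with $\c{R}'d=\dd\,\c{R}'$ exactly as you do. There, the consistency of each $\star$ rests on the flange/Bramble--Hilbert paragraph, and the non-smooth case is deferred to the appendix and \cite{GP26}. The genuine gap is your closing of Step~3, which fails in two concrete ways.

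First, the factor from $\bar{\dd}$ is not ``matched on both sides''. Step~1 gives only $\c{R}'_{m-k-1}\star\grw=\dstar y+e$, with $e$ entrywise $O(h)$ relative to $\dstar y$, and nothing makes $e$ vary smoothly from cell to cell. For smooth data, $\bar{\dd}\dstar y$ is small because neighbouring entries nearly cancel; it is $O(h)$ relative to $\dstar y$. By contrast, $\bar{\dd}e$ has no such cancellation and is of the \emph{same} order, so after $\bar{\dstar}$ the relative error is $O(1)$, not $O(h)$. This already happens for $k=0$, $m=2$ and a smooth $1$-form: per-edge flux errors of size $\abs{\astar\grs_1}\,O(h)=O(h^2)$, summed around a dual cell of area $\sim h^2$, leave an $O(1)$ error at the vertex. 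This is the familiar lack of pointwise consistency of circumcentric finite-volume divergences on irregular meshes. A relative $O(h)$ survives only in operator norm, measured against $\norm{\bar{\dstar}\bar{\dd}\dstar}\sim h^{-2}$. That is far too weak for Proposition~\ref{prop:prd_solns_converge}, which needs $H=O(h)$ in absolute norm.

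Second, Steps~2 and~3 require incompatible meanings of $d$. Because $\c{I}_{k+1}y$ is only tangentially continuous, the tangential trace of $\star\c{I}_{k+1}y$ jumps across the $(m-1)$-faces of $M$, and these faces cut through the dual cells. Stokes' identity $\c{R}'d=\bar{\dd}\c{R}'$ is then exact only for the distributional $d$, whose singular part is carried by those faces. But $\c{R}_k\star\eta$ integrates over $k$-simplices lying inside those faces, so it is undefined (for $k=0$ it is a point evaluation). If you use the piecewise-classical $d$ instead, Step~2 acquires jump terms of relative size $O(1)$.

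Replacing $\star\grw$ by $\bar{\c{I}}\bar{\c{R}}\star\grw$ does not repair this. Bramble--Hilbert on a flange needs $H^1$ regularity, and $H^2$ to control $d$ of the error, across exactly the faces where $\star\grw$ jumps. Even granting it, you must still push the Step~1 error through $\bar{\dd}$, which is the first problem again.

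A cellwise identity of this kind is not available in general. What works is a weak comparison: for $k=0$, the exact identity between $\dd_0^{\top}\dstar_1\dd_0$ and the Whitney stiffness matrix, plus a mass-lumping estimate for $\dstar_0$. The paper only asserts this step, so it offers no fix to borrow.

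Two minor points. On $(k+1)$-forms the sign is $(-1)^{m(k+2)+1}$, not $(-1)^{m(k+1)+1}$. Also, Step~1 needs $\c{I}_{k+1}$ to be discrete-hodge-consistent, which Assumption~\ref{ass:respectDeRham} alone does not give.
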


%% file: lma.prd.tex
\begin{lemma}\label{lma:prd}
	Let $\man{M}, M, \lap_k, p_r(\lap_k)$ be from Notation \ref{nota:basicManifold} and let $M', X, \dlap_k, \c{R}_k, \c{I}_k$ be from Notations \ref{nota:genCell}. Then
	\[ \c{R}_k p_r(\lap_k) \c{I}_k = p_r (\dlap_k) + O(h), \;\; \dlap_k = \dstar \dd \dstar \dd + \dd \dstar \dd \dstar.  \]
\end{lemma}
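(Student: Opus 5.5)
The plan is to reduce the statement to a symbol-level identity between the continuous and discrete polynomials, and the key fact is Lemma \ref{lma:approx_dstar}. Since $p_r$ is a polynomial of order $r$ in $\lap_k$, it suffices to treat monomials $\lap_k^j$, $j\le r$. We then need the exact composition rule in which an interpolant is inserted between successive factors:
\[ \c{R}_k \lap_k^j \c{I}_k \heq (\c{R}_k \lap_k \c{I}_k)^j . \]
Linearity in the coefficients of $p_r$ then gives the claim, because the implicit constants depend only on $r$ and on the (fixed) coefficients.

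\textbf{Base case $j=1$.} Write $\lap_k = d\grd + \grd d$ and treat the two pieces separately.

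For $\c{R}_k \grd d\, \c{I}_k$:
\begin{itemize}
\item Insert the projection $\c{I}_{k+1}\c{R}_{k+1}$ after $d$. Because $\{\c{I}\c{R}\}$ is a cochain projection (Assumption \ref{ass:respectDeRham}), it commutes with $d$, so $d\,\c{I}_k = \c{I}_{k+1}\c{R}_{k+1} d\, \c{I}_k = \c{I}_{k+1}\dd_k$.
\item The identity $\c{R}_{k+1} d\, \c{I}_k = \dd_k$ holds exactly, by Stokes' theorem together with $\c{R}\c{I}=I$.
\item Lemma \ref{lma:approx_dstar} then gives $\c{R}_k \grd\, \c{I}_{k+1}\dd_k \heq \dstar\dd\dstar\,\dd$.
\end{itemize}

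For $\c{R}_k d\grd\, \c{I}_k$:
\begin{itemize}
\item Commute $\c{R}_k d = \dd_{k-1}\c{R}_{k-1}$ exactly, again by Stokes.
\item Insert $\c{I}_{k-1}\c{R}_{k-1}$ in front of $\grd\,\c{I}_k$. The bounded-projection approximation property makes this cost only $O(h)$ on the smooth range.
\item Apply Lemma \ref{lma:approx_dstar} at degree $k-1$ to obtain $\dd\dstar\dd\dstar$.
\end{itemize}
The dual hodge stars $\bar\dstar$ appear automatically here, via Notation \ref{nota:genCell}.

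\textbf{Inductive step.} Write $\lap^{j}=\lap\cdot\lap^{j-1}$ and insert $\c{I}_k\c{R}_k$ between the factors:
\[ \c{R}\lap^j\c{I} = \c{R}\lap\,\c{I}\,\c{R}\lap^{j-1}\c{I} + \c{R}\lap\,(I-\c{I}\c{R})\lap^{j-1}\c{I}. \]
The first term is $(\dlap+O(h))(\dlap^{j-1}+O(h))$ by induction. The discrete factors have norm bounded by shape-regularity (the $\dstar$ are well-conditioned) for fixed $h$-scaling in the $\heq$ sense, so the products of errors remain $O(h)$ as used throughout the paper.

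\textbf{Main obstacle.} The hard step is the remainder term $\c{R}\lap(I-\c{I}\c{R})\lap^{j-1}\c{I}$. It requires the cochain projection to approximate $\lap^{j-1}\c{I}x$ to $O(h)$ in a norm strong enough that one more application of $\lap$ followed by $\c{R}$ stays controlled. The range of $\c{I}$ is only piecewise smooth, so $\lap^{j-1}\c{I}x$ must be read distributionally, in the sense of Remark \ref{rmk:weakform_interp} and (\ref{eqn:distributionalDR}).

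I would handle this by working flange-by-flange. On each flange, $\c{R}^*e_i$ pairs only with integrals over $\grs_{k,i}$, and constant forms make (\ref{eqn:dstar}) exact. Bramble--Hilbert on each flange then yields the $O(h)$ bound. This is the same discrete-hodge-consistency mechanism used for Lemma \ref{lma:approx_dstar}, and I would rely on it, with the cited \cite{GP26} appendix, for the non-smooth forms.
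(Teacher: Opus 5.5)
Your base case holds within the paper's framework. Stokes, the identity $d\,\c{I}_k=\c{I}_{k+1}\dd_k$, and Lemma~\ref{lma:approx_dstar} give $\c{R}_k\lap_k\c{I}_k\heq\dlap_k$, which already settles $r=1$. (The extra insertion of $\c{I}_{k-1}\c{R}_{k-1}$ in the $d\grd$ term is superfluous, and costs nothing since $\c{R}\c{I}\c{R}=\c{R}$.)

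The gap is the inductive step. You only ever control the sandwiched operator $\c{R}_k\lap_k\c{I}_k$, so powers of $\lap_k$ do not compose. You are left with the remainder $\c{R}_k\lap_k(I-\c{I}_k\c{R}_k)\lap_k^{j-1}\c{I}_k$, and the flange-wise Bramble--Hilbert plan cannot control it, for two reasons:
\begin{itemize}
\item $\c{I}_kx$ is only piecewise smooth. So for $j\ge 2$, $\lap_k^{j-1}\c{I}_kx$ is a distribution whose singular part lies on the interelement faces meeting every flange. For Whitney $\c{I}_0$, $\lap_0\c{I}_0x$ is exactly a surface measure made of jumps of normal derivatives. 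There is therefore no Sobolev seminorm on a flange for Bramble--Hilbert to bound.
\item Even for a smooth argument, Bramble--Hilbert only makes the interpolation error a positive power of $h$. The outer $\c{R}_k\lap_k$ acts like $\dlap_k$, whose entries are of order $h^{-2}$; shape-regularity controls ratios of cell measures, not the size of $\dlap_k$. For a piecewise-linear interpolant this route gives at best an $O(1)$ bound.
\end{itemize}

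The missing idea is the one the paper's proof is built on: prove an \emph{intertwining} relation on forms instead of a sandwiched one.
\begin{itemize}
\item Push $\c{R}_k$ rightward through $\star d\star d$ and $d\star d\star$. Replace each $\c{R}\star$ by $(\dstar+O(h))\c{R}'$ via discrete-hodge-consistency (\ref{eqn:discreteHodgeConsistency}), and each $\c{R}d$ by $\dd\,\c{R}$ via Stokes.
\item This gives $\c{R}_k\lap_k=(\dlap_k+O(h))\c{R}_k$ with no interpolant in it. It iterates directly to $\c{R}_k\lap_k^t=(\dlap_k+O(h))^t\c{R}_k$.
\item Linearity gives $\c{R}_kp_r(\lap_k)=p_r(\dlap_k+O(h))\c{R}_k$. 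Only at the very end does $\c{I}_k$ enter, through $\c{R}_k\c{I}_k=I_{n_k}$.
\end{itemize}
With this relation your remainder is no obstacle at all, since $\c{R}\lap(I-\c{I}\c{R})=(\dlap+O(h))(\c{R}-\c{R}\c{I}\c{R})=0$.

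The regularity issue you rightly flag does not disappear: the paper simply applies (\ref{eqn:discreteHodgeConsistency}) to whatever intermediate forms arise. But your induction needs this un-sandwiched consistency relation. Lemma~\ref{lma:approx_dstar} cannot supply it, because it sits between $\c{R}$ and $\c{I}$ and therefore cannot be iterated.
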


\begin{proof}
	From Lemma \ref{lma:approx_dstar} we have
	\[ 
	\begin{array}{rl}
		\c{R}_k \star d \star d &= \( \dstar + O(h) \) \c{R}' _{m-k} d \star d \\
		&= \( \dstar +O(h) \) \dd \c{R}' _{m-k-1} \star d  \\
		&= \( \dstar + O(h) \) \dd \( \dstar + O(h) \) \c{R}_{k+1} d \\
		&= \( \dstar + O(h) \) \dd \( \dstar + O(h) \) \dd \c{R}_k \\
		&= \( \dstar \dd \dstar \dd + O(h) \) \c{R}_k \\
	\end{array}
	\]
	Similarly,
	\[	\c{R}_k d \star d \star = \( \dd \dstar \dd \dstar + O(h)\) \c{R}_k, \]
	and summation gives
	\begin{equation} \label{eqn:contToDiscLap}
		\c{R}_k \lap_k = \( \dlap_k + O(h) \) \c{I}_k.
	\end{equation}
	So for the monomials $\c{R}_k \lap_k ^t = \( \dlap_k + O(h) \)^t \c{R}_k \forall t \in \nat$. Linearity now provides 
	\[ \c{R}_k p_r (\lap_k) = p_r (\dlap_k + O(h)) \c{R}_k.\] 
	Since $\c{I}_k$ respects the De Rham map, $\c{R}_k \c{I}_k = I_{n_k}$ and
	\[ \c{R}_k p_r(\lap_k) \c{I}_k = p_r (\dlap_k + O(h)) \c{R}_k \c{I}_k = p_r(\dlap_k + O(h)) = p_r(\dlap_k) + O(h)  \]
	as claimed.
\end{proof}

%% file: prop.prd_solns_converge.tex
\begin{prop} \label{prop:prd_solns_converge}
	Let $p_r (\dlap_k) \tilde{x} = f$ and $\( p_r(\dlap_k) + H \) x = f$ for $H=O(h), |h| \ll 1$. Then $\norm{x - \tilde{x}} = O(h) \norm{f}.$ 
\end{prop}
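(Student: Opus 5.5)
We compare the two solutions $\tilde{x}$ and $x$ directly, writing $A := p_r(\dlap_k)$ and $B := A + H$. The plan is to bound $\norm{A^{-1}}$ uniformly in $h$, and then use a Neumann-series (resolvent) perturbation argument to control $B^{-1}$ and the difference $B^{-1} - A^{-1}$.

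\textbf{Step 1: uniform bound on $\norm{A^{-1}}$.} Under Assumption \ref{ass:goodComplex}, the diagonal stars $\dstar, \bar{\dstar}$ are positive, so $\dlap_k$ is self-adjoint and positive semidefinite in the inner product weighted by $\dstar_k$. Since $p_r$ is strictly positive in the sense of Notation \ref{nota:basicManifold}, every eigenvalue of $A$ is $p_r(\grl)$ for some $\grl \geq 0$. Such values are bounded below by $c_0 := \inf_{\grl \geq 0} p_r(\grl) > 0$; for the relevant case $p_1 = \grk^2 + \grl$ this gives $c_0 = \grk^2$. Hence, in the $\dstar_k$-weighted norm, $\norm{A^{-1}} \leq 1/c_0$. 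Shape-regularity makes $\dstar_k$ well-conditioned with $h$-independent bounds once it is suitably normalised, so the same bound transfers to the Euclidean norm up to a constant.

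\textbf{Step 2: invertibility of $B$.} Write $B = A(I + A^{-1}H)$. Since $\norm{A^{-1}H} \leq \norm{H}/c_0 = O(h)$, for $|h| \ll 1$ we have $\norm{A^{-1}H} < 1/2$. The Neumann series then shows that $B$ is invertible, with
\[ \norm{B^{-1}} \leq \frac{\norm{A^{-1}}}{1 - \norm{A^{-1}H}} \leq \frac{2}{c_0}. \]

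\textbf{Step 3: the resolvent identity.} We have
\[ x - \tilde{x} = (B^{-1} - A^{-1}) f = -B^{-1} H A^{-1} f, \]
which gives
\[ \norm{x - \tilde{x}} \leq \norm{B^{-1}}\,\norm{H}\,\norm{A^{-1}}\,\norm{f} \leq \frac{2}{c_0^2}\, O(h)\, \norm{f} = O(h)\norm{f}. \]

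The main obstacle is Step 1. We need a lower bound on the spectrum of $A$ that is uniform in $h$, and we need the norms used to be comparable with $h$-independent constants. The raw matrix $\dlap_k$ is not symmetric, and the ratios in $\dstar$ scale like $h^{m-2k}$. I would therefore first symmetrise, using $\dstar_k^{1/2}\dlap_k\dstar_k^{-1/2}$, or work throughout in the $\dstar$-inner product. I would interpret $H = O(h)$ in that same norm, as Lemma \ref{lma:prd} naturally provides. If $\grk = 0$ and $p_r$ vanishes at $0$, the bound would instead have to rely on a discrete Poincaré inequality on the complement of harmonic cochains. The strict positivity hypothesis on $p_r$ is what lets us avoid this case.
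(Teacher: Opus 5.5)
Your proof is correct and is essentially the paper's argument: both treat $p_r(\dlap_k)+H$ as a Neumann-series perturbation of $A = p_r(\dlap_k)$. The paper expands $(I + HA^{-1})^{-1} = I - HA^{-1} + O(h^2)$ to get $x - \tilde{x} = -A^{-1}HA^{-1}f + O(h^2)f$, whereas you use the exact identity $x - \tilde{x} = -(A+H)^{-1}HA^{-1}f$, which leaves no remainder term to track. You also make explicit the $h$-uniform bound $\norm{A^{-1}} \leq 1/c_0$ that the paper's step $\norm{A^{-1}HA^{-1}} = O(h)$ tacitly relies on, obtaining it from the spectral mapping in the $\dstar_k$-weighted inner product rather than from the discrete Poincar\'e inequalities the paper cites.
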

\begin{proof}
	For $B = p_r(\dlap_k)$, one has $(I + H \inv{B})B x = f$, with $B$ being invertible since $p_r(\lap)$ is strictly positive by assumption, whence, over a shape-regular mesh, discrete poincare inequalities grant $p_r(\dlap) \Gt 0$. Now,
	\[ \begin{array}{rl} 
		x &= \inv{B} \( I + H \inv{B} \)^{-1} f \\
		&= \inv{B}\(f - H \inv{B}f \) + O(h^2)f  \quad \quad \text{Taylor expansion, } H = O(h)\\
		&= \tilde{x} - \inv{B} H \inv{B} f + \inv{B} O(h^2) f.\\
	\end{array} \]
	So, by Cauchy-Schwarz,
	\[ \norm{x - \tilde{x}}^2 = \norm{\inv{B} H \inv{B} f}^2 + O(h^4) \leq \norm{\inv{B} H \inv{B}}^2 \norm{f}^2 + O(h^4) = O(h^2) \norm{f}^2,\]
	the square root of both sides then proving the claim.
\end{proof}

%% file: prop.prd_soln_convergence.tex
\begin{prop}\label{prop:prd_soln_convergence}
Suppose $k \in \{ 0, m \}$ and the deterministic problem, 
	\[ \( \grk^2 + \lap_k \) u = \grf \]
	is discretized over $X_h \in \{ M_h, M'_h \}$ as 
	\[ \( \grk^2 I_{n_k} + \dlap_k \) x_h = f,  \]
Let the approximation error, $e_h$, be defined as
	\[ e_h = u - \c{I}_k x_h. \]
Then there is a unique $x_h$ that can be computed for which $\norm{e_h } _{M} = O(h) \norm{\grf}_{M}$. 
Furthermore, if $\grf = \c{W}$, a white noise process over $\man{M}$, and $u$ is approximated in the mean-square sense, there is a unique covariance,
	\[ \norm{\Cov(u) - \Cov(\c{I}_k x_h)} = O(h^2). \]
\end{prop}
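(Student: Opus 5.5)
We argue by an unperturbed reference solution, a perturbation bound, and a covariance bound. Throughout, $k\in\{0,m\}$, so that on $X_h\in\{M_h,M'_h\}$ the operator $\lap_k$ is (up to $\star$) the Laplace--Beltrami operator. By Lemma \ref{lma:prd}, $\c{R}_k(\grk^2+\lap_k)\c{I}_k = \grk^2 I_{n_k} + \dlap_k + O(h)$. By Proposition \ref{prop:prd_solns_converge}, dropping the $O(h)$ term changes the discrete solution only by $O(h)\norm{f}$.

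\textbf{Well-posedness.} First we show $\grk^2 I + \dlap_k$ is invertible. For $k=0$, $\dlap_0 = \dstar\dd\dstar\dd$ is similar to the symmetric matrix $\dstar^{1/2}\dd^{*}\dstar\dd\dstar^{1/2}$, up to the sign conventions of $\dstar$, since $\dstar$ is diagonal and positive. The cases $k=m$ and $X=M'$ follow by exchanging the roles of $M$ and $M'$. This matrix is positive semidefinite. Hence $\grk^2 I+\dlap_k$ has spectrum in $[\grk^2,\infty)$ and a unique solution $x_h$ exists. Well-centeredness gives $\dstar\Gt 0$, and shape-regularity gives a uniform condition number for $\dstar$. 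Together these give a uniform bound $\norm{(\grk^2 I+\dlap_k)^{-1}}\le C$ in the $\dstar$-weighted norm.

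\textbf{Deterministic error.} Split the error as
\[ e_h = (u-\c{I}_k\c{R}_k u) + \c{I}_k(\c{R}_k u - x_h). \]
The first term is $O(h)\norm{\grf}$. This follows because $\c{I}_k\c{R}_k$ is a bounded cochain projection (Assumption \ref{ass:respectDeRham}) together with elliptic regularity $\norm{u}_{H^2}\lesssim\norm{\grf}$ on the closed manifold. For the second term, apply $\c{R}_k$ to the continuous equation and use (\ref{eqn:contToDiscLap}). This shows that $\c{R}_k u$ satisfies the discrete system up to a residual $O(h)\norm{u}_{H^2}$. By the uniform stability above, $\norm{\c{R}_k u - x_h} = O(h)\norm{\grf}$. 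Boundedness of $\c{I}_k$ (norm equivalence on shape-regular meshes) then gives $\norm{e_h}_M = O(h)\norm{\grf}_M$.

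\textbf{Main obstacle.} The step I expect to be hardest is that the residual in (\ref{eqn:contToDiscLap}) must be controlled in a mesh-independent norm. The $O(h)$ terms of Lemma \ref{lma:prd} come from discrete-hodge-consistency, which needs smoothness of $\star d u$ on flanges. I would rely on the flange/Bramble--Hilbert argument and on $u\in H^2$, citing \cite{GP26} for the non-smooth interpolant case.

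\textbf{Stochastic part.} Take $\grf=\c{W}$ and solve in mean square. Write $\c{I}_k x_h = \c{I}_k A_h^{-1}\c{R}_k\c{W}$ with $A_h = \grk^2 I+\dlap_k$, and $u = (\grk^2+\lap_k)^{-1}\c{W}$. Let $T = (\grk^2+\lap_k)^{-1}$ and $T_h = \c{I}_k A_h^{-1}\c{R}_k$. Then the covariances are $T T^*$ and $T_h T_h^*$, using $\mean{\c{W}\c{W}^*}=id$ as in (\ref{eqn:protobase_precision}). The deterministic step gives $\norm{T - T_h} = O(h)$ in operator norm on the relevant spaces.

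Write
\[ TT^*-T_hT_h^* = (T-T_h)T^* + T_h(T-T_h)^*. \]
This naively gives only $O(h)$. To obtain $O(h^2)$, use self-adjointness and an Aubin--Nitsche-type duality argument. The leading error term, $\c{I}_k\c{R}_k$ applied to $TT^*$ minus $TT^*$, can be tested against a smooth adjoint solution, gaining an additional factor of $h$. Concretely, I would show $(T-T_h)T^* = O(h^2)$ using $T^*\c{W}$-type smoothing: $T^*$ maps $L^2$ into $H^2$, and the projection error on $H^2$ inputs, measured in a dual norm, is $O(h^2)$. Both terms are then $O(h^2)$. Uniqueness of the covariance follows from uniqueness of $x_h$ for each realization.
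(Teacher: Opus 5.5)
Your well-posedness step is fine (it needs $\grk\neq 0$, since $\dlap_0$ annihilates constants on a closed complex). The deterministic estimate, however, does not close the way you set it up. The paper does not use Lemma~\ref{lma:prd} here at all; it simply cites Theorem~5.2 of \cite{GP26}.

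Your argument needs the nodal residual $(\grk^2 I+\dlap_k)\c{R}_k u-f$ to be $O(h)$ in the $\dstar$-weighted $\ell^2$ norm, so that $\norm{(\grk^2 I+\dlap_k)^{-1}}\le\grk^{-2}$ can be applied. On general unstructured meshes it is not. Take $k=0$ (dually $k=m$), where $\dlap_0$ is the circumcentric, cotangent-type Laplacian:
\begin{itemize}
\item The flange/Bramble--Hilbert argument only gives an $O(h)$ \emph{relative} error in the flux through each dual face, i.e.\ a defect of size $O(h)\abs{\astar\grs_1}\sim h^m$.
\item Summing over the $O(1)$ edges at a vertex and dividing by $\abs{\astar\grs_0}\sim h^m$ leaves a nodal residual that is generally of order one. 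It cancels only on special, locally symmetric meshes.
\end{itemize}
This is the known failure of pointwise consistency of the cotangent Laplacian, and more smoothness of $u$ does not fix it. So (\ref{eqn:contToDiscLap}) cannot be read as an $O(h)$ statement about nodal residuals.

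What is true is that the residual is in divergence form, $\dd_0^{\top} r$ (up to an $O(h)$ quadrature term), with $r$ the edgewise flux defect. It is therefore $O(h)$ only in a discrete $H^{-1}$ norm. To repair the step, test the error equation with $\c{R}_0u-x_h$, sum by parts, and close in the discrete energy norm $\grk^2\norm{\cdot}_{\dstar_0}^2+\norm{\dd_0\,\cdot}_{\dstar_1}^2$. Alternatively, cite \cite{GP26} as the paper does.

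In the stochastic step you are right that the naive expansion gives only $O(h)$. The paper gets $O(h^2)$ from $\norm{\Cov(u)-\Cov(u_h)}\le\norm{\Cov(e_h)}$, which silently drops the cross-covariances between $u_h$ and $e_h$. Your duality repair, however, has three holes.
\begin{enumerate}
\item[(i)] The smoothing you invoke is available for $(T-T_h)T^*$ but not for $T_h(T-T_h)^*$. The operator $T_h^*$ maps into $\Range(\c{R}_k^*)$ and does not smooth. Use instead $TT^*-T_hT_h^*=(T-T_h)T^*+T(T-T_h)^*-(T-T_h)(T-T_h)^*$.
\item[(ii)] For $k=0$, $T_h=\c{I}_0A_h^{-1}\c{R}_0$ contains point evaluation, which is unbounded on $L^2$. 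Then $\c{R}_0\c{W}$ is undefined and $T_hT_h^*$ contains $\c{R}_0\c{R}_0^*$. So ``$\norm{T-T_h}=O(h)$ in operator norm'' fails until the load is replaced by a bounded functional, such as dual-cell averages.
\item[(iii)] For $k=m$ with Whitney forms, $\Range(\c{I}_m)$ is the piecewise constants. Let $\Pi_h$ be the $L^2$-orthogonal projection onto them. Then both $\norm{(T-T_h)T^*}$ and $\norm{TT^*-T_hT_h^*}$ are at least $\norm{(I-\Pi_h)TT^*}$. On quasi-uniform meshes this is bounded below by $c\,h$; apply it to a non-constant eigenform of $\lap_m$.
\end{enumerate}
So the claimed $O(h^2)$ cannot hold in operator norm. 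An Aubin--Nitsche gain is available only in a weak norm, testing the covariance against smooth functions on both sides. You need to fix that norm explicitly and prove the corresponding negative-norm estimate for the DEC scheme.
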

\begin{proof}
	The uniqueness of $x_h$ comes from the least positive eigenvalue, $\grl_{1}(\dlap_k)$, of $\dlap_k$ being bounded away from 0. This is given by the discrete poincare inequalities of \cite{TS16DEC} for $k \in \{0, m \}$, by \cite{Kaibo26} for $k \leq m \in \{1, 2 \}$ and by \cite{GP26} for all $m, k$. 
	The error, $e_h$, being controlled in norm by $\grf$ as $O(h)$ in the deterministic case follows directly from recent results from \cite{GP26}, Theorem 5.2. 

For the stochastic case when $\grf$ is a random process over $\man{M}$, we have that
	\[ \norm{\Cov(u) - \Cov(u_h) } \leq \norm{\Cov(u - u_h)} = \norm{\Cov(e_h)} \leq O(h^2) \norm{ \Cov(\grf) }. \]
	 When $\grf$ is the white noise in particular, $\Cov(\grf) (x,y) = \Cov(\c{W}) (x, y) = \delta_{xy}$, giving the result. 
\end{proof}

%% file: lma.dual_dlap.tex
\begin{lemma}\label{lma:dual_dlap}
	When $k(m-k) \in 2 \mathbb{N}$, $\dstar x$ solves 
	\[  p_r (\dlap_{m-k}) \dstar x = \dstar f \]
	 for $\grr \in \rats$ if and only if $x$ solves
	\[ p_r (\dlap_k ')  x = f. \]
\end{lemma}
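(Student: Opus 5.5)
The plan is to prove the identity at the level of operators: $\dstar$ intertwines the DEC hodge-laplacian on the dual complex with the one on the primal complex, that is, $\dstar_k \dlap'_k = \dlap_{m-k}\dstar_k$ as maps $\c{C}^{'k} \to \c{C}^{m-k}$. Once this holds, the intertwining lifts to powers of $\dlap$ and, by linearity, to any polynomial $p_r$. Since $\dstar$ is an invertible diagonal matrix (positive entries, well-conditioned by shape-regularity), the equivalence of the two linear systems then follows immediately.

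The first step is to expand both operators using Definition \ref{defn:hodge_laplacian} and Notation \ref{nota:hodgeLaplacian}. On $X'$ we have $\dlap'_k = \dd'\dstar\dd\dstar' + \dstar\dd\dstar'\dd'$, with the star operators acting between $X'$ and $X$ at the appropriate degrees. On $X$ we have $\dlap_{m-k} = \dd\dstar'\dd'\dstar + \dstar'\dd'\dstar\dd$.

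The second step is to compute $\dlap_{m-k}\dstar_k$ term by term. The term $\dd\dstar'\dd'\dstar\dstar_k$ contains the composite $\dstar\dstar$. Because the cell-dual satisfies $\astar\astar = (-1)^{k(m-k)}$ and $\dstar$ is defined via the volume ratios $|\astar\grt|/|\grt|$, we get $\dstar_{m-k}\dstar_k = (-1)^{k(m-k)} I$, which equals $I$ under the hypothesis $k(m-k)\in 2\nat$. Degree bookkeeping is needed here, since the neighbouring degrees $k\pm1$ carry signs $(-1)^{(k\pm1)(m-k\mp1)}$. I will track these and check that they cancel in pairs, because each summand contains exactly two star-star contractions, or else absorb them into the sign convention implicit in $\astar\astar$ on the footnoted orientations. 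After inserting $\dstar\dstar^{-1}$ appropriately, the two summands of $\dlap_{m-k}\dstar$ are mapped to $\dstar$ times the two summands of $\dlap'_k$, with the codifferential part and the exterior-derivative part exchanging roles. This gives $\dlap_{m-k}\dstar = \dstar\dlap'_k$.

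The third step is induction. From the intertwining, $\dlap_{m-k}^t\dstar = \dstar(\dlap'_k)^t$ for every $t\in\nat$. Linearity then gives $p_r(\dlap_{m-k})\dstar = \dstar\, p_r(\dlap'_k)$; the coefficients may lie in $\rats$ or indeed $\real$, which is where the quantifier over $\grr$ enters. Now suppose $p_r(\dlap'_k)x = f$. Applying $\dstar$ gives $p_r(\dlap_{m-k})\dstar x = \dstar f$. Conversely, if $p_r(\dlap_{m-k})\dstar x = \dstar f$, the intertwining rewrites this as $\dstar\,p_r(\dlap'_k)x = \dstar f$, and multiplying by $\dstar^{-1}$ gives the original system.

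The main obstacle is the second step: making the degree and orientation signs consistent, so that the composite star pairs really reduce to the identity, and not merely to $\pm I$ in a way that fails to cancel between the two summands. I expect to resolve this by working at the level of the cochain spaces. The footnote after $(M')'\equiv M$ says orientation conventions leave cochain spaces invariant, so $\dstar'\dstar$ can be identified with $I$ at each degree, with the parity hypothesis guaranteeing this at degree $k$ itself.
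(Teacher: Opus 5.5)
Your proposal is correct and follows the paper's argument. The paper proves $\dstar\, p_r(\dlap_k)\,\dstar = p_r(\dlap'_{m-k})$ by cancelling one pair $\dstar\dstar = I$ at an end of each summand and then passing to monomials and constants; given $\dstar\dstar = I$, this is your intertwining $p_r(\dlap_{m-k})\dstar = \dstar\, p_r(\dlap'_k)$ with $X$ and $X'$ exchanged. The sign bookkeeping you anticipate at degrees $k\pm 1$ is not needed: writing out the two summands shows that only the degree-$(k, m-k)$ contraction $\dstar\dstar$ occurs, and the hypothesis $k(m-k)\in 2\nat$ supplies exactly that.
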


\begin{proof}
	Clearly, $\dstar: \c{C}^s (X) \to \c{C}^{m-s} (X')$ is invertible, so
	\[  p_r(\dlap_k) x = f \iff \dstar  p_r(\dlap_k)  x = \dstar f \]
	Since $\dstar \dstar = I$ when $k(m-k) \in 2 \nat$, one has $x = \dstar \dstar x$ and
	\[ \dstar p_r (\dlap_k) x = \dstar f \iff \dstar  p_r(\dlap_k) \dstar (\dstar x) = \dstar f. \]
	Clearly, $\dstar \dlap_k ^t \dstar = (\dlap' _{m-k}) ^t \forall t \in \nat$:
	\[ \dstar (\dstar \dd \dstar \dd + \dd \dstar \dd \dstar) \dstar = \dd \dstar \dd \dstar + \dstar \dd \dstar \dd = \dlap' _{m-k}: \c{C}^{m-k} (X') \to \c{C}^{m-k} (X').  \]
	Since constants, $a$, are invariant under $\dstar a \dstar$ one has that 
	\[ \dstar p_r (\dlap_k) \dstar = p_r (\dlap' _{m-k}), \]
	proving the claim.
\end{proof}
Observing that when $k=m$, one has $\dstar(i, i) = 1/\abs{\grs_{i}}$ immediately provides the following useful corollary


%% file: corl.mto0_dlap.tex
\begin{corl}\label{corl:mto0_dlap}
	Let $\grs_i$ be the $m$-cell indexed by $i$ in $X \in \{ M, M' \}$.
	A discrete $m$-form $y \in \grL^m (X)$ solves 
	\[ p_r (\dlap_m) y = f, f \in \reals{n_m} \]
	if and only if $\bar{y}; \bar{y}_i := y_i/\abs{\grs_i}$ solves 
	\[ p_r (\dlap_0 ') \bar{y} = \bar{f}. \;\; \bar{f}_i := f_i /\abs{\grs_i}. \]
\end{corl}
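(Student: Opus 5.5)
Corollary \ref{corl:mto0_dlap} follows from Lemma \ref{lma:dual_dlap} with $k=m$ once we identify the diagonal Hodge stars explicitly.

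First I apply Lemma \ref{lma:dual_dlap} with $k=m$. Then $k(m-k)=0\in 2\nat$, so the parity hypothesis holds. The lemma is stated with the roles of $X$ and $X'$ interchanged relative to what we need. Since $(X')'\equiv X$, I apply it on the dual complex and obtain the equivalence
\[ p_r(\dlap_m)\,y = f \iff p_r(\dlap'_0)\,(\dstar y) = \dstar f, \]
where $\dstar:\c{C}^m(X)\to\c{C}^{'0}$ is the discrete Hodge star on $m$-cochains.

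The only genuine point to verify is the identity $\dstar\dlap_m\dstar=\dlap'_0$ in this degree. I check it directly from Notation \ref{nota:hodgeLaplacian}. On $m$-cochains the term $\dstar\dd\dstar\dd$ vanishes because $\dd_m=0$, so $\dlap_m=\dd\dstar\dd\dstar$. On $0$-cochains of $X'$ we likewise have $\dlap'_0=\dstar\dd\dstar\dd$. Conjugating by the Hodge star then gives $\dstar(\dd\dstar\dd\dstar)\dstar=\dstar\dd\dstar\dd$, using $\dstar\dstar=I$, which holds since the sign $(-1)^{k(m-k)}$ equals $1$. Scalars commute with $\dstar$, so the polynomial $p_r$ passes through the conjugation.

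It remains to identify $\dstar$ on $m$-cochains. By the definition of the discrete Hodge star,
\[ \dstar_m(i,i)=\frac{\abs{\astar\grs_i}}{\abs{\grs_i}}. \]
The dual of an $m$-cell is a $0$-cell, whose measure is $1$ by convention, so $\dstar_m(i,i)=1/\abs{\grs_i}$. Consequently $(\dstar y)_i=y_i/\abs{\grs_i}=\bar y_i$ and $(\dstar f)_i=\bar f_i$. Substituting these into the equivalence above gives exactly the claim.

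The main obstacle is this last convention, namely that $\abs{\astar\grs_m}=1$ for the $0$-dimensional dual cell. Together with the check that the inverse star $\dstar_0$ on $X'$ is diagonal with entries $\abs{\grs_i}$ (so that $\dstar\dstar=I$ holds entrywise), this carries the whole argument. I would state both facts explicitly; the rest is bookkeeping.
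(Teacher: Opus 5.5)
Your proposal is correct and follows the paper's route exactly. The corollary is Lemma \ref{lma:dual_dlap} specialised to $k=m$, where $k(m-k)=0$ is even, combined with the observation that $\dstar_m(i,i)=\abs{\astar\grs_i}/\abs{\grs_i}=1/\abs{\grs_i}$ because the dual of an $m$-cell is a point of unit measure. Your additional checks are details the paper leaves implicit: the swap of $X$ and $X'$ relative to the lemma's literal statement, the vanishing of one term of each Laplacian in these extreme degrees, and $\dstar\dstar=I$ holding entrywise.
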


%% file: thm.decgmrf.tex
\begin{thm}\label{thm:decgmrf}
	Let $x: (\grW, \c{F}, \Prob) \times M_0 \to \real$ be the values over the nodes of $M$ of $u$ solving (\ref{eqn:man_spde}) over $\man{M}$, with $(\grW, \c{F}, \Prob)$ being a suitable probability space. Let $P:= \grk^2 I_n + \dlap_0$. Then $x$ is a Gauss Markov Random Field with covariance, $C(\gra, \grk)$, given by 
	\begin{equation}\label{eqn:theorem2_1}
		C(\gra, \grk) = \((P^{\floor{\gra/2}})^{\top} (P^{\ceil{\gra/2}})\)^{-1} + O(h^2)
	\end{equation}
	Let $y \in (\grW, \c{F}, \Prob') \times M' _0 \to \real$ be the $m$-plexwise means of $u$ solving (\ref{eqn:man_spde}) over $\man{M}$. Let $\bar{P}:= \grk^2 I_{n'} + \bar{\dlap}_0$. Then $y$ is a Gauss Markov Random Field with covariance, $\bar{C}(\gra, \grk)$, given by 
	\begin{equation}\label{eqn:theorem2_2}
		\bar{C}(\gra, \grk) = \( (\bar{P}^{\floor{\gra/2}})^{\top} (\bar{P})^{\ceil{\gra/2}}. \)^{-1}  + O(h^2)
	\end{equation}
\end{thm}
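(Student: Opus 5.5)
The plan is to build the covariance inductively from the base case $\gra=1$, using the DEC discretization in place of the Galerkin pair $(\f{V}_k,\f{U}_k)$ of \S\ref{sec:protoscheme}. Take the test map $\f{V}_0=\c{R}_0^*$, as in (\ref{eqn:distributionalDR}), and the trial map $\f{U}_0=\c{I}_0$, which respects the De Rham map by Assumption \ref{ass:respectDeRham}. Then the mass matrix is $\f{M}_0=\c{R}_0\c{I}_0=I_n$. By Lemma \ref{lma:prd} with $p_1(\lap_0)=\grk^2+\lap_0$, the stiffness-type matrix is $\f{K}_0=\c{R}_0 p_1(\lap_0)\c{I}_0=P+O(h)$. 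Substituting these into the recursion (\ref{eq:proto_cov}) gives, for the precision matrices, $C_1=\f{K}_0$, $C_2=\f{K}_0^*\f{K}_0$, and $C_\gra=\f{K}_0^*C_{\gra-2}\f{K}_0$. Since $\dlap_0$ is symmetric on $0$-forms (because $\dstar$ is diagonal), $P$ is symmetric, so a short induction collapses this to $(P^{\floor{\gra/2}})^\top P^{\ceil{\gra/2}}$ once the $O(h)$ terms are dropped. For odd $\gra$ the base case uses (\ref{eqn:protobase_precision}) with the identity $\mean{\c{W}\c{W}^*}=id$, which is exactly where the half-power is absorbed without ever approximating $p_1(\lap_0)^{1/2}$. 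The Markov property follows because $P$ is sparse, with the sparsity pattern of the $1$-skeleton, and hence so is any polynomial in $P$ of bounded degree.

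The second step controls the error. Write $\tilde{C}=(P^{\floor{\gra/2}})^\top P^{\ceil{\gra/2}}$ and $C_\gra=\tilde{C}+O(h)$. The covariance is $C_\gra^{-1}=\tilde{C}^{-1}(I+O(h)\tilde{C}^{-1})^{-1}$. Because $P\Gt\grk^2 I$ and the discrete Poincar\'e inequality cited in Proposition \ref{prop:prd_soln_convergence} bounds $\tilde{C}^{-1}$ uniformly in $h$, the Neumann/Taylor argument of Proposition \ref{prop:prd_solns_converge} gives $C_\gra^{-1}=\tilde{C}^{-1}+O(h)$.

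To upgrade this to the claimed $O(h^2)$, I would not rely on the matrix perturbation alone. Instead I would compare the discrete solution with the continuous field, applying the stochastic part of Proposition \ref{prop:prd_soln_convergence} level by level through the recursion (\ref{eqn:polylap_recursive_DEC}). That proposition gives the covariance error of $u$ against $\c{I}_0 x_h$ as $\norm{\Cov(e_h)}=O(h^2)$, since the error enters quadratically in a covariance. Pushing this through $\c{R}_0$, which is bounded on the continuous interpolant space, gives the nodal-value covariance up to $O(h^2)$. This squaring step is the main obstacle: the deterministic $O(h)$ discrepancy in $\f{K}_0$ must be shown to contribute only at second order to $\Cov(x)$. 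I would first try to establish this by writing $x-\tilde{x}=\tilde{C}$-weighted $O(h)\,\c{W}$-terms and taking expectations. Failing that, I would lean directly on the $O(h^2)$ covariance statement of Proposition \ref{prop:prd_soln_convergence}, iterated over the $\ceil{\gra/2}$ levels, with constants depending on $\gra$ and $\grk$.

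For the $m$-plexwise means $y$, the plan is to apply the same argument to the $m$-form problem. Averaging $u$ over each $m$-plex is $\c{R}_m$ applied to $u\,\mathrm{vol}$ and then divided by the volume, so $y_i=(\c{R}_m \star u)_i/\abs{\grs_i}$. The equation for $\star u$ is (\ref{eqn:man_spde}) with $k=m$, since $\star$ commutes with $\lap$. Corollary \ref{corl:mto0_dlap}, which is Lemma \ref{lma:dual_dlap} with $k=m$ where $k(m-k)=0$ is even, converts $p_1(\dlap_m)$ acting on $m$-cochains into $p_1(\bar{\dlap}_0)$ acting on the rescaled cochain $\bar{y}$ over the dual $0$-cells. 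Repeating steps one and two with $\bar{P}$ in place of $P$ then yields (\ref{eqn:theorem2_2}). The one point to check is that the white noise rescaled by $1/\abs{\grs_i}$ still has identity-proportional covariance in the normalization used. If it does not, the diagonal volume factor must be absorbed into the definition of $\bar{y}$, as the corollary already does for $\bar{f}$.
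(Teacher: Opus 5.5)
Your outline follows the paper's proof step for step:
\begin{itemize}
\item the pairing $\f{V}_0=\c{R}_0^*$, $\f{U}_0=\c{I}_0$ with $\f{M}_0=I_n$;
\item Lemma \ref{lma:prd} to replace $\f{K}_0$ by $P$;
\item the even base case from identity-covariance noise and the odd base case from (\ref{eqn:protobase_precision});
\item squaring the $O(h)$ error;
\item Corollary \ref{corl:mto0_dlap} for the means.
\end{itemize}

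\textbf{Symmetry of $P$.} $P$ is not symmetric. By Notation \ref{nota:graph_laplacian}, $\dlap_0=\bar{\dstar}L$ is a diagonal matrix of inverse dual-cell volumes times a weighted graph Laplacian. It is therefore self-adjoint only in the $\bar{\dstar}^{-1}$-weighted inner product. Your collapse does not need symmetry: $C_1=P$, $C_2=P^{\top}P$ and $C_\gra=P^{\top}C_{\gra-2}P$ give $(P^{\floor{\gra/2}})^{\top}P^{\ceil{\gra/2}}$ directly, which is why the statement keeps the transpose. However, for odd $\gra$ the resulting matrix is then not symmetric, so it is not literally a precision matrix. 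This traces back to (\ref{eqn:protobase_precision}): its derivation silently replaces $p_r(\lap_k)^{1/2}\f{V}_k$ by $p_r(\lap_k)^{1/2}\f{U}_k$, so it needs $\f{V}_k=\f{U}_k$. With $\f{V}_0=\c{R}_0^*$, the same computation leaves $\c{R}_0\,p_1(\lap_0)\,\c{R}_0^*$ (point values of $p_1(\lap_0)$ applied to Dirac masses) between $\f{K}_0^{-1}$ and $\f{K}_0^{-\top}$. So the half-power is not absorbed as you claim; the paper's proof has the same defect.

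\textbf{The $O(h)\to O(h^2)$ step.} The two steps you flag are genuine gaps, and the paper closes neither. From $\norm{x-\tilde{x}}=O(h)$ alone, only $O(h)$ follows for covariances. Writing $x=\tilde{x}+e$,
\[ \Cov(x)-\Cov(\tilde{x})=\mean{e\tilde{x}^{\top}}+\mean{\tilde{x}e^{\top}}+\mean{ee^{\top}}, \]
and only the last term is second order (take $x=(1+h)\tilde{x}$).
\begin{itemize}
\item Your first route, expanding $e$ as $O(h)$-weighted noise and taking expectations, produces exactly these first-order cross terms. Your own perturbation computation already shows one: an $O(h)$ change $H$ of the precision moves the covariance by $-\tilde{C}^{-1}H\tilde{C}^{-1}+O(h^2)$.
\item Your fallback is what the paper does. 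It asserts $\Cov(x)-\Cov(\tilde{x})=O(h^2)$ from $\norm{\tilde{x}-x}=O(h)$, relying on the inequality $\norm{\Cov(u)-\Cov(u_h)}\le\norm{\Cov(u-u_h)}$ in the proof of Proposition \ref{prop:prd_soln_convergence}. That inequality drops the same cross terms, so iterating it over $\ceil{\gra/2}$ levels cannot help.
\item Passing an $L^2$-type bound on $e_h$ to nodal values through $\c{R}_0$ is a further gap. $e_h$ contains $u$, so it is not in the interpolant space, and point evaluation is not $L^2$-bounded.
\end{itemize}
Getting $O(h^2)$ needs a new ingredient. One option is a superconvergent $O(h^2)$ nodal error estimate, which in particular requires the remainder in Lemma \ref{lma:prd} to be $O(h^2)$. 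Another is a structural cancellation of the symmetrized cross term. Without one of these, the defensible rate is $O(h)$.

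\textbf{The means.} The check you propose fails. The quantities $(\c{R}_m\star\c{W})_i=\int_{\grs_i}\c{W}$ are independent with variance $\abs{\grs_i}$. Hence $\bar{f}$ has covariance $\mathrm{diag}(1/\abs{\grs_i})=\dstar_m$, not $I$. For even $\gra$ this gives $\Cov(y)=\bar{P}^{-\gra/2}\dstar_m(\bar{P}^{-\gra/2})^{\top}$, which differs from (\ref{eqn:theorem2_2}) by the diagonal factor $\dstar_m$, not by $O(h^2)$. Absorbing that factor changes both the variable (to $\abs{\grs_i}^{1/2}y_i$) and the operator (to $\dstar_m^{-1/2}\bar{P}\dstar_m^{1/2}$), so it proves a different statement. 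The paper's ``arguing as above'' simply imposes identity covariance on the discrete noise. It does the same at the nodes by declaring $\c{R}_0\c{W}\sim\c{N}(0,1)$, although white noise has no point values. On this point your proposal is more careful than the paper, but neither argument establishes (\ref{eqn:theorem2_2}) as stated.
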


\begin{proof}
	We have from the definition of the De Rham map, that $x = \c{R}_0 u$. We have from \ref{prop:prd_soln_convergence} that $v_{\gra}$ is an $O(h)$-convergent solution to the (distributional) weak-form, (\ref{eqn:polylap_recursive_weakform}), with $\f{U}_0 = \f{V} _0$, 
	\[ (\c{R}_0 ^* z, \grk^2 + \lap_0 \c{I}_0 v_{\gra}) = (\c{R}_0 ^* z, \c{I}_0 v_{\gra-2}), \;\; \gra \geq 2. \]
	Since $\grk^2 + \lap_0$ is a strictly positive polynomial in $\lap_0$, we have by Lemma \ref{lma:prd} from setting $p_r(\lap_0) = \grk^2 + \lap_0$ and the definition of $P$ that
	\[ Pv_{\gra}= v_{\gra-2} \Rightarrow v_{\gra} = \inv{P} v_{\gra-2}, \;\; \gra \geq 2. \]
	For $\gra \in 2 \nat$ we have from the base case in (\ref{eqn:polylap_recursive_weakform}) that $v_0 = \c{R}_0 \c{W}$, which is distributed as $\c{N}(0, 1)$ since $\c{W}$ is the white noise, and:
	\[ \tilde{x} = v_{\gra} = (\inv{P})^{\gra/2} v_0, \;\; v_0 \sim \c{N}(0, 1),  \] 
	whence
	\[  \Cov(\tilde{x}) = \mean{\tilde{x} \tilde{x}^{\top}} = \( (P^{\gra/2})^{\top} P^{\gra/2} \)^{-1} =\( (P^{\floor{\gra/2}})^{\top} (P)^{\ceil{\gra/2}}.\)^{-1}  \]
	For $\gra \in 2 \nat + 1$ we have from (\ref{eqn:protobase_precision}) with $\f{V}_0 ^* = \c{R}_0$ that $\f{K} = P$ whence
	\[ \Cov(\tilde{x}) = \mean{\tilde{x} \tilde{x}^{\top}} = \( (P^{(\gra-1)/2})^{\top} P (P)^{(\gra -1)/2} \)^{-1} = \( (P^{\floor{\gra/2}})^{\top} (P)^{\ceil{\gra/2}}\)^{-1}. \]
	Since $\norm{\tilde{x} - x} = O(h)$, we have that 
	\[ \Cov(x) - \Cov(\tilde{x}) = O(h^2),  \]
	whence
	\[ C(\gra, \grk) =  \( (P^{\floor{\gra/2}})^{\top} (P^{\ceil{\gra/2}}) \)^{-1} + O(h^2) \]
	as posited in (\ref{eqn:theorem2_1}).
	Now Corollary (\ref{corl:mto0_dlap}) gives (\ref{eqn:theorem2_2}) arguing as above.
\end{proof}

%% file: S.Approximation.tex
\section{Universal Approximation Scheme} \label{sec:Approx}
Given the covariance, $C(\gra, \grk)$, of the random process defined by (\ref{eqn:man_spde}) and convergently approximated over a discretisation, $M$, of $\man{M}$, as in Theorem \ref{thm:decgmrf} we turn to the problem of approximating $C(\gra, \grk)$.  

Let $U(\gra, \grk), \grS(\gra, \grk)$ be the eigenvectors and eigenvalues of $C(\gra, \grk)$; then we recall that $\grl_i = O(i^{-\gra})$ \cite{kelner09higher}[cite manifolds instead] and that this decay allows one to obtain, by the Eckart-Young theorem, a $k$-rank approximation of $C(\gra, \grk)$ with an error that is $O( (k+1)^{-\gra})$. This observation continues beyond the case of finite dimensions into countable dimensions as was made foundationally by [continuous guy 1958] and stated for the case of gaussian random variables by Kosambi, Karhunen and Loeve [cite KKL]. Here we show that, by such a truncation, we may efficiently obtain a relative error approximation to a gaussian process over $M$, \emph{agnostic} to $\gra, \grk$.

\subsection{Parameter Agnosticity/Universality}  
The fourier basis clearly diagonalizes $(\grk^2 + \lap)^{\gra/2}$ for all $\grk, \gra$, with the spectrum absorbing the dependence on $\grk, \gra$. This property is preserved in the discrete precision matrices, $P, \bar{P}$, in Theorem \ref{thm:decgmrf}:
\begin{corl} \label{corl:param_agnostic}
	Let $\dlap_0 = U \grS V^{\top}, \dlap' _0 = \bar{U} \bar{\grS} \bar{V}^{\top}$ be the SVDs of $\dlap_0, \dlap' _0$ respectively. Then
	\[ P = U (\grk^2 + \grS) V^{\top}, \;\; \bar{P} = \bar{U} (\grk^2 + \bar{\grS}) \bar{V}^{\top} \]
	and 
	\begin{equation}\label{eqn:param_agnostic} 
		\begin{array}{rl}
			\inv{C}(\gra, \grk) &= V \( \grk^2 + \grS)^{\gra} V^{\top}, \\
			\inv{\bar{C}}(\gra, \grk) &= \bar{V} (\grk^2 + \bar{\grS} )^{\gra} \bar{V}^{\top}.
		\end{array}
	\end{equation}
	Furthermore, if $\grP_k, \bar{\grP}_k$ are orthogonal projections onto any $k$-dimensional eigenspace of $C(\gra, \grk), \bar{C}(\gra, \grk)$ respectively, then $\grP_k, \bar{\grP}_k$ are invariant to $\gra, \grk$.
\end{corl}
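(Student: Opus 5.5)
The plan is to take the SVD of $\dlap_0$ and push it through the polynomial structure of the precision matrix $P$ given by Theorem \ref{thm:decgmrf}. We work with the leading-order term there, dropping the $O(h^2)$ remainder, which is not $\gra,\grk$-dependent in structure. Start from $\dlap_0 = U\grS V^{\top}$. Since $P = \grk^2 I_n + \dlap_0$ and $\grk^2 I_n = \grk^2 U V^{\top}$ only when $U=V$, the first thing to establish is that $U$ and $V$ can be taken equal.

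\textbf{Step 1: symmetry.} Write $\dlap_0 = \dstar_0^{-1}\,\bigl(\dd_0^{\top}\dstar_1\dd_0\bigr)$ up to the sign and transpose conventions for $\bar{\dd}$ on the dual. Then $\dlap_0$ is self-adjoint and positive semidefinite in the $\dstar_0$-weighted inner product, which is the inner product the paper implicitly uses for the adjoint $\dad$.

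In that inner product, the spectral theorem gives an orthonormal eigenbasis. This lets us take $U = V$ and $\grS \succeq 0$, with the SVD coinciding with the eigendecomposition. (If Euclidean orthogonality is required, one symmetrizes via $\dstar_0^{1/2}\dlap_0\dstar_0^{-1/2}$ and reads $V$ in those coordinates.)

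With $U=V$ we get $P = V(\grk^2 I + \grS)V^{\top}$ directly, since $VV^{\top}=I$. The same argument applies verbatim to $\bar{\dlap}_0$ on $M'$ with $\bar{\dstar}$. This is the main obstacle: reconciling the stated SVD form with a genuine diagonalization. I would handle it by interpreting all transposes as adjoints in the $\dstar$-inner product.

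\textbf{Step 2: powers and the precision matrix.} Since $V$ is orthogonal,
\[
P^{j} = V(\grk^2 + \grS)^{j}V^{\top} \quad \text{for all } j \in \nat,
\]
and $P$ is symmetric, so $(P^{\floor{\gra/2}})^{\top} = P^{\floor{\gra/2}}$. Therefore
\[
\inv{C}(\gra,\grk) = P^{\floor{\gra/2}}P^{\ceil{\gra/2}} = V(\grk^2+\grS)^{\floor{\gra/2}+\ceil{\gra/2}}V^{\top} = V(\grk^2+\grS)^{\gra}V^{\top}.
\]
The dual statement $\inv{\bar{C}}(\gra,\grk) = \bar{V}(\grk^2+\bar{\grS})^{\gra}\bar{V}^{\top}$ follows in the same way, using Corollary \ref{corl:mto0_dlap} and Theorem \ref{thm:decgmrf}.

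\textbf{Step 3: invariance of the projections.} It follows that
\[
C(\gra,\grk) = V(\grk^2+\grS)^{-\gra}V^{\top}.
\]
The columns of $V$ are therefore eigenvectors of $C(\gra,\grk)$ for every $\gra,\grk$, and the eigenvalues are $(\grk^2+\grs_i)^{-\gra}$. The map $t\mapsto(\grk^2+t)^{-\gra}$ is strictly decreasing on $t\ge 0$ for all $\gra\in\nat$ and $\grk\in\real$, with $\grk^2+\grs_i>0$ by the strict positivity assumption.

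Consequently, distinct eigenvalues of $\dlap_0$ map to distinct eigenvalues of $C$, and in reversed order. Each eigenspace of $C(\gra,\grk)$ is then a span of eigenvectors of $\dlap_0$ sharing one value of $\grs_i$, and this span is independent of $(\gra,\grk)$. Any orthogonal projection $\grP_k$ onto a $k$-dimensional eigenspace, in particular the top-$k$ one associated with the $k$ smallest $\grs_i$, is therefore the same matrix for all $\gra,\grk$.

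The tie case, where $\grs_k = \grs_{k+1}$, requires fixing one choice of $k$ columns of $V$ once. That choice is then valid simultaneously for all parameters, and the identical argument gives the result for $\bar{\grP}_k$.
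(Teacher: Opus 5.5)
You follow the route the paper intends. The paper gives no separate proof: it reads the corollary off Theorem~\ref{thm:decgmrf} by substituting the decomposition of $\dlap_0$ into $P$ and adding exponents, which tacitly assumes $UV^{\top}=I$. You rightly flag this as the crux, but Step~1 does not resolve it. By Notation~\ref{nota:graph_laplacian}, $\dlap_0=\bar{\dstar}L=\dstar_0^{-1}L$ with $L=\dd_0^{\top}\dstar_1\dd_0$ symmetric. Self-adjointness in the $\dstar_0$-inner product then gives $\dlap_0=W\grL W^{\top}\dstar_0$ with $W^{\top}\dstar_0W=I$. That is not an SVD, since $W$ is not orthogonal. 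Nor does it make $P$ symmetric: $P^{\top}=\dstar_0P\dstar_0^{-1}\neq P$ unless $\dstar_0$ is a scalar multiple of the identity (on a connected mesh).

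You also cannot reinterpret the transposes as $\dstar_0$-adjoints. In Theorem~\ref{thm:decgmrf}, $C=\mean{\tilde{x}\tilde{x}^{\top}}$ with $v_0\sim\c{N}(0,I)$, so that transpose is Euclidean. Likewise, ``orthogonal projection'' in the statement is Euclidean, as in Theorems~\ref{thm:main} and~\ref{thm:eigspace_approx}. Your symmetrization fallback fails for the same reason. For even $\gra$, $y=\dstar_0^{1/2}x$ has covariance $\tilde{P}^{-\gra/2}\dstar_0\tilde{P}^{-\gra/2}$ with $\tilde{P}=\dstar_0^{1/2}P\dstar_0^{-1/2}$. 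The middle factor $\dstar_0$ blocks the simplification, and in any case these eigenprojections are not those of $C$. So the claim ``$P$ is symmetric'' in Step~2 is false, and the exponent-adding there is unjustified. For $\gra=1$, Theorem~\ref{thm:decgmrf} even gives $C=\inv{P}$, which is not symmetric.

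With Euclidean transposes and projections, the conclusion itself fails in general, not just your argument. For $\gra=2$,
\[ \inv{C}(2,\grk)=P^{\top}P=\grk^4 I+\grk^2\left(L\dstar_0^{-1}+\dstar_0^{-1}L\right)+L\dstar_0^{-2}L, \]
whose eigenvectors move with $\grk$ unless the two coefficient matrices share an eigenbasis. Already for a single unit-weight edge with $\dstar_0^{-1}=\mathrm{diag}(1,2)$, the eigenvector angle $\theta$ of $P^{\top}P$ satisfies $\tan 2\theta=(3\grk^2+5)/\grk^2$.

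So the displayed identities hold only when $\dlap_0$ is symmetric, i.e.\ with uniform dual-cell volumes (resp.\ uniform $m$-simplex volumes for $\bar{\dlap}_0=\dstar\bar{L}$). In that case your Steps~2--3 are fine as written.

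Your weighted idea does prove a true but different statement. If the discretized noise has covariance $\dstar_0^{-1}$ (dual-cell averages of white noise), then for even $\gra$ one gets $C=W(\grk^2+\grL)^{-\gra}W^{\top}$. The $\dstar_0$-orthogonal spectral projectors onto spans of columns of $W$ are then parameter-free, but this is a generalized eigenproblem. In the Euclidean setting of the statement, parameter-independence holds only approximately. That approximation is exactly what Section~\ref{sec:Approx} quantifies via volume concentration (Lemma~\ref{lma:li_bound}, Theorem~\ref{thm:eigspace_approx}), treating $\dstar$ as a multiplicative perturbation of a graph-laplacian.
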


We call this invariance of eigenprojections to $\gra, \grk$ as \emph{parameter agnosticity}. 

\begin{remark} \label{rmk:likeContinuous}
	The implication of (\ref{eqn:param_agnostic}) is that, for the specific case of $\gra \in \nat$,the discretization of an operator-exponent of the continuous linear operator, $(\grk^2 + \lap)^{\gra/2}$, is the operator-exponent of its discretization; operator-exponentiation commutes with discretization. The recursive phrasing of \cite{rue11} and (\ref{eq:polylap_recursive}) may thus be abandoned altogether, as mentioned in \S\ref{sec:protoscheme}. 
\end{remark}

The projections in Corollary \ref{corl:param_agnostic} depend only on the quality of the triangulation, $M$, and the geometry of $\man{M}$. However, the monotonicity in the spectral decay of the covariances with respect to $\gra$ allows one to obtain low-rank projections of the covariance matrices with \emph{uniformly bounded} relative error across all $\grk, \gra$, much like KKL in the continuous case.

\subsection{Approximation} 
We begin with an assumption that ensures that the DEC hodge-laplacian over the polytopal complex of a triangulation of $\man{M}$ has a highly delocalized or \emph{incoherent} bottom $k$-eigenspace. \footnote{This can be lemmatized by assuming certain geometric properties of $\man{M}$ (like non-negative Ricci curvature), or by equivalently assuming that the 1-skeleton of $M' _h$ has good expansion properties: obeying a poincare inequality, having uniform conductance, or a cheeger-constant or fiedler value bounded away from 0. This is reserved for later versions of this preprint in the interest of sharing the broader argument concisely first.} 

\input{assume.laplacian_incoherence}

A central tool in approximating the eigenspace of $\bar{\dlap} = \dstar' \bar{L}$ using that of $\bar{L}:= \dd \bar{\dstar} \bar{\dd}$ is the bound of Li \cite{liRelPert94} controlling the perturbation to singular spaces caused by a multiplicative perturbation. We capture this in a contextually relevant form:
\input{lma.li_bound}

\begin{remark}\label{rmk:no0_nolap}
	$L _k = \bar{\dd}_{m-k-1} \dstar_{k+1} \dd_k$ (resp. $\bar{L}_k$) are all symmetric positive semidefinite, but only $L_0$ (resp. $L' _0$) may be assumed to be graph-laplacians across all discrete topologies of $M$ (resp. $M'$).  
\end{remark}

\begin{notation} \label{nota:graph_laplacian}
	Let $L, \bar{L}$ be the graph-laplacians on the vertices of $M, M'$ respectively, with 
	\[ \dlap_0 = \bar{\dstar} L,\;\; \bar{\dlap}_0 = \dstar \bar{L},\quad \dlap_m = \bar{L} \dstar_m,\;\; \bar{\dlap}_m = \bar{\dstar_m} L.  \]
\end{notation}

The main approximation result may now be proven.

\input{thm.eigspace_approx}

\begin{remark} \label{rmk:rand-v-det}
	Sampling with $p_i \propto |\grs_i|$ and arguing similarly yields a sketching result with probabilistic guarantees requiring a higher dimensional projection. Instead we leave further approximations of $V_k$ to efficient eigencomputations with graph-laplacians, which may include sketching - via graph-sparsification for instance.
\end{remark}

\subsection{Algorithm/Scheme}

Theorem \ref{thm:eigspace_approx} results in an algorithm to compute an approximation to $C(\gra, \grk)$ that guarantees a relative error of $\gre < 1$ and, therefore, an algorithm to compute a low-dimensional representation of measurements taken of a GMRF, including that of simplicial means.

\subsubsection{Algorithm}

\begin{minimalgo}{$\mathtt{UniversalBasis}$ - Compute $\epsilon$-Approximating Basis}
	\textbf{Assumes:} $\mathtt{LaplacianEig}$ is a Laplacian Eigensolver \\
	\textbf{Input:} \begin{enumerate}
		\item[\quad] $(X_0, X_1, \dstar_1), \; X \in \{ M, M' \}$. 
		\item[\quad] Relative-error tolerance, $\gre > \grr_M s$.
	\end{enumerate}
	\textbf{Returns:}  $\tilde{V}_k \in \reals{n_0,k}$, $\tilde{V}_k ^{\top} \tilde{V}_k = I_k$, and $\tilde{\grP}_k = \tilde{V}_k \tilde{V}_k ^{\top}$ is a \emph{Universal Approximator} for covariance matrices, $C_{\gra, \grk}$, over $X_0$.
  \begin{enumerate}[label=\arabic*., leftmargin=1.5em, itemsep=2pt, topsep=8pt]
	\item $k \from \displaystyle\frac{\gre}{\grr_M s}.$ \comment{Rank of approximation}
	\item $L[i, j] \from (-\dstar_1) (l,l)$ such that $\grt_{1,l} = (\grt_{0, i}, \grt_{0, j})$ \comment{Initialize Laplacian to be adjacency}
	\item $\Diagonal(L) \from -L \mathbf{1}_{n_0}$ \comment{faster for $(d+1)$-regular $M'$} 
	\item $\tilde{V}_k \from  \mathtt{LaplacianEig}(L, k)$ \comment{May be implicitly computed in certain applications}
	\item Return $\tilde{V}_k$
  \end{enumerate}
\end{minimalgo}

\subsection{Runtime of $\mathtt{LaplacianEig}$}
The runtime of $\mathtt{UniversalBasis}$ depends on the runtime of $\mathtt{LaplacianEig}$. 
Many routines in numerical linear algebra address precisely this problem of computing the bottom $k$-SVD/EVD of a hermitian matrix. LOBPCG\cite{lobpcg1, lobpcg2} is often considered a good deterministic choice, requiring $O(n_1 k + n_0 k^2 )$ floating point operations (flops) per iteration \footnote{This is fast despite hiding $O(k^3)$ operations per iteration in the big $O$ notation}, with the total number of iterations being $O(1/\bar{\gre})$ to guarantee a relative error, $\bar{\gre}$, giving a total running time of $O((n_1 k + n_0 k^2 )/\bar{\gre})$. This has been improved to $O\( (n_1 k +  n_0 k^2)/\sqrt{\bar{\gre}} \)$ for the deterministic case in \cite{Lazysvd16}. 

Randomized algorithms offer more room for tradeoff between runtime and accuracy from their greater margins of failure compared to the deterministic case. A randomized SVD for this problem, allowing an additional rank-budget of $k$, provides a rank $2k$ approximation that contains the best rank $k$ approximation to $L$ with a high probability \cite{RST10, HMT11} and takes $O(n_1 k + n_0 k^2  + k^3)$ flops to obtain: $q n_1 (2k)$ from mutiplying a random, $(n_0, 2k)$, gaussian (or DFT) $q$ times\footnote{this initial multiplication is a common feature in fast SVD algorithms, the running time of [LazySVD] reflecting it by the $O(kn_1 + k^2 n_0)$ factor.}   with $L$, $n_0 (4k)^2$ to orthonormalize the result to get a basis, $Q$; $8k^3$ to find the deterministic SVD of $(Q^{\top} L) Q$. As the mesh-size gets large, this is recommended since $k$ is effectively fixed for this problem due to the nature of spectral decay in $L$. 

In this version, we shall take $O(n_1 k + n_0 k^2)/\sqrt{\gre}$, acknowledging that LOBPCG has overwhelming advantages that justify choosing it in practice\footnote{It is a mature part of PETSc, and has implementations in JAX, Torch, and various other languages, and offers the use of preconditioners which are natural for the problem at hand.}. 

\begin{prop}\label{thm:algoUniversalBasis}
	Algorithm $\mathtt{UniversalBasis}$ has runtime $O\( k n_1 + k^2 n_0)/\sqrt{\bar{\gre}} \)$.
\end{prop}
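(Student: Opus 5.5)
The runtime is the sum of the costs of the four steps of $\mathtt{UniversalBasis}$. The first three steps are cheap bookkeeping, so the eigensolver call in step 4 dominates.

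Step 1 computes $k$ from $\gre$, $\grr_M$ and $s$ in $O(1)$ flops. Step 2 writes the off-diagonal entries of $L$. Each edge $\grt_{1,l}=(\grt_{0,i},\grt_{0,j})$ of $X$ contributes exactly two symmetric entries $-\dstar_1(l,l)$. Storing $L$ sparsely (e.g.\ CSR built from the edge list), this costs $O(n_1)$ flops, and $L$ ends up with $O(n_0+n_1)$ nonzeros. Step 3 sets the diagonal to $-L\mathbf{1}_{n_0}$, which is one sparse matrix--vector product and so costs $O(n_0+n_1)$. Since $M$ is a connected triangulation, $n_0=O(n_1)$, and steps 1--3 together cost $O(n_1)$. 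This is dominated by $k n_1/\sqrt{\bar{\gre}}$ for any $k\geq 1$ and $\bar{\gre}<1$. Step 5 returns $\tilde V_k$, which costs at most $O(n_0 k)$ if it is copied, and this is also dominated.

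The substantive step is step 4. Here I would invoke the cost model for $\mathtt{LaplacianEig}$ adopted in the discussion preceding the proposition: the accelerated block Krylov / LazySVD-type solver of \cite{Lazysvd16}. Each iteration of such a solver consists of two kinds of work:
\begin{itemize}
\item one product of the sparse $L$ with an $n_0\times k$ block, costing $O(\mathrm{nnz}(L)\,k)=O(n_1k)$;
\item orthonormalization and a Rayleigh--Ritz step on that block, costing $O(n_0k^2)$, plus a $k\times k$ dense eigenproblem costing $O(k^3)$.
\end{itemize}
The solver needs $O(1/\sqrt{\bar{\gre}})$ iterations to reach relative accuracy $\bar{\gre}$. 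Multiplying the per-iteration cost by the iteration count gives $O((n_1k+n_0k^2+k^3)/\sqrt{\bar{\gre}})$.

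It remains to absorb the $k^3$ term. Since $k\leq n_0$, we have $k^3\leq n_0k^2$. Adding the $O(n_1)$ cost of steps 1--3 then yields the stated $O((kn_1+k^2n_0)/\sqrt{\bar{\gre}})$.

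The main obstacle is that the bound is only as good as the assumed iteration complexity of the eigensolver. A $1/\sqrt{\bar{\gre}}$ iteration count holds for gap-free guarantees, i.e.\ accuracy relative to the $k$-th eigenvalue, and it hides logarithmic factors in $n_0$ and $1/\bar{\gre}$ as well as the cost of the initial block. I would state explicitly that $\bar{\gre}$ denotes the eigensolver's tolerance in the sense of \cite{Lazysvd16}, and treat the logarithms as absorbed into the $O(\cdot)$, as the paper does. I would also check that $L$ is genuinely symmetric positive semidefinite, which holds because $\dstar_1$ has positive entries on well-centered meshes. This is what makes the Hermitian solver, and hence its complexity bound, applicable.
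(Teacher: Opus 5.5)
Your proposal follows the paper's own route: the paper likewise treats the $\mathtt{LaplacianEig}$ call as the dominant cost, adopts the deterministic bound of \cite{Lazysvd16} with $O(1/\sqrt{\bar{\gre}})$ iterations, and hides the $O(k^3)$ per-iteration term inside $O(n_1k+n_0k^2)$, as you do via $k\le n_0$. Your explicit $O(n_1)$ accounting for steps 1--3 (using $n_0=O(n_1)$ on a connected complex) and your caveats about hidden logarithms and the meaning of $\bar{\gre}$ are sound additions that the paper leaves implicit.
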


\begin{remark}\label{rmk:noSVD}
	The work of \cite{Lazysvd16} does not require the matrix to be symmetric. So the runtime stated above applies just as well to $\dlap$. The reason for reducing the problem to a graph-laplacian problem is that common downstream tasks that otherwise rely on an Eigenvalue or Singular Value Decomposition may be done faster on graphs without explicitly computing this decomposition. In the case where this, often significant, computational advantage is not sought from graphs, $\mathtt{UniversalBasis}$ may instead be defined to return the corresponding $k$ eigenvectors of $\dstar L$ instead of $L$.
\end{remark}

Some examples of the advantage of working with graph-laplacians include the speedup of spectral clustering using polynomial graph filters \cite{KnyazevPoly15}, or chebyshev filters for $k$-means \cite{ChebyPolyFilterClustering24}. Other examples include the quadratic-form preserving sparsification of graphs (or introduction of variance-preserving independence between components), pioneered by the works of \cite{SpecSparsSpielEtAl13}, which has been recently proven to have an especially fast and simple equivalent in certain cases \cite{PetrosUniformEdge25}.  

Nevertheless, we provide one immediate consequence of a low-rank approximation to $C^{\gra, \grk}$ - a low-dimensional representation of a GMRF: 

\begin{minimalgo}{$\mathtt{SquashGMRF}$ - Low-Dimensional Representation of GMRF}
	\textbf{Assumes:} $\mathtt{UniversalBasis}$ to provide a basis guaranteeing an projection error, $\gre$ \\ 
	\textbf{Input:} \begin{enumerate}
		\item[\quad] $(X_0, X_1, \dstar_1)$ for $X \in \{M, M'\}$;
		\item[\quad] Relative-error, $\gre > \grr_M \eta$; 
		\item[\quad] Measurements of a GMRF, $Y \in \reals{n_0, N}$, $X_0$.
	\end{enumerate}
	\textbf{Returns: $Z \in \reals{kN}, k \ll N$.}

	\begin{enumerate}[label=\arabic*., leftmargin=1.5em, itemsep=2pt, topsep=8pt]
		\item $\tilde{V}_k \from \mathtt{UniversalBasis}(X_0, X_1, \dstar_1), \gre)$ 
		\item Return $\tilde{V}_k ^{\top} Y$ \comment{$O(k n_0 N)$}
	\end{enumerate}
\end{minimalgo}

\begin{enumerate}
	\item $Z = \mathtt{SquashGMRF}((X_0, X_1, \dstar_1), \gre, Y)$, may be used, for instance, in learning $\gra, \grk$ from multiple measurements of a GMRF, by modelling the decay of the spectrum of $Z^{\top} Z \in \reals{k^2}$. 
	\item Since $\mathtt{UniversalBasis}$ returns a small basis for an arbitrarily large problem, we have the natural compressed-sensing result, that $O(k \log(k))$ measurements in $Y$ suffice to preserve the output of $\mathtt{SquashGMRF}$\footnote{This shall be made more rigorous in later versions, in the interest of brevity}. In applications where measurements are of highly varying quality/reliability/difficulty, this reduces error-propagation by allowing for a low number of high-quality measurements to be modelled on. 
\end{enumerate}.

%% file: assume.laplacian_incoherence.tex
\begin{assume}\label{ass:laplacian_incoherence}
	Let $\bar{L}_h := \dd_{m-1} \bar{\dstar} _1 \bar{\dd}_0 = \bar{\dd}_0 ^{\top} \bar{\dstar} _1 \bar{\dd} _0$ defined over the polytopal $M' _h$. Let $\bar{L}_h$ have the property that for all $0 \leq h< h^*$, its eigenspace, $S_{h, k}$, associated with its smallest $k$ positive eigenvalues, has an $l_{\infty}$ bounded $l_2$ unit-sphere: $\forall x \in S_{h, k}, \norm{x}_2 = 1$,
	\[ \norm{x}_{\infty} = \max_i \abs{x_i} = O(\sqrt{1/\bar{n}_h}), \]
	where $\bar{n}_h$ is the number of vertices in $M' _h$. 
\end{assume}

%% file: lma.li_bound.tex
\begin{lemma}\label{lma:li_bound}
	Let $L \in \reals{n, n}$ be a graph-laplacian of a connected graph on $n$ vertices. Let $D$ be a diagonal matrix close to $I_n$, the identity. Let $V_k, \tilde{V}_k$ be orthonormal bases for $k$-dimensional right singular (row) spaces $S_k, \tilde{S}_k$ corresponding to the smallest $k$ positive singular values of $DL, L$ respectively.
	Let $\sin(S_k, \tilde{S}_k)$ be the diagonal matrix of the sines of principal angles between $S_k, \tilde{S}_k$. Then for $V_k$, the eigenbasis of $L$ spanning $S_k$,
	\begin{equation} \label{eqn:li_bound} 
		\norm{\sin(S_k, \tilde{S}_k)} = \norm{\sin(\tilde{S}_k, S_k)} = k \sqrt{\norm{(I - D)V_k}^2 + \norm{(I - \inv{D})V_k}^2}.
	\end{equation}
\end{lemma}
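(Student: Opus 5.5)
The plan is to treat $DL$ as a \emph{multiplicative} perturbation of $L$ and apply the relative perturbation theorem of Li \cite{liRelPert94} for singular subspaces. Li's result says the following. Let $\tilde{B} = D_1^{*} B D_2$ with $D_1, D_2$ nonsingular. Let $(U_1, V_1)$ be a pair of singular subspaces of $B$ and $(\tilde{U}_1, \tilde{V}_1)$ the corresponding pair of $\tilde{B}$. Then the $\sin\Theta$ matrices are bounded by
\[ \frac{\sqrt{\norm{(I - D_1^{*})U_1}^2 + \norm{(I - \inv{D_1})U_1}^2 + \norm{(I-D_2^{*})V_1}^2 + \norm{(I - \inv{D_2})V_1}^2}}{\eta}, \]
where $\eta$ is a relative gap between the retained singular values of one matrix and the discarded singular values of the other. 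The bound holds in Frobenius norm, and in any unitarily invariant norm with a modified constant. I would instantiate this with $B = L$, $D_1 = D^{*} = D$ (diagonal, hence real symmetric) and $D_2 = I_n$. The two terms involving $D_2$ then vanish identically, and since $L$ is symmetric its left and right singular vectors for positive singular values coincide with its eigenvectors $V_k$.

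\textbf{Step 1 (kernel).} Before invoking Li, I would handle the kernel, since the lemma concerns the \emph{smallest positive} singular values. Because the graph is connected, $\ker L = \mathrm{span}(\mathbf{1})$. Because $D$ is invertible, $\ker(DL) = \ker L$, so the right null spaces of $L$ and $DL$ agree exactly. I would therefore restrict both operators to $\mathbf{1}^{\perp}$, where both are nonsingular. On $\mathbf{1}^{\perp}$ the subspaces $S_k$ and $\tilde{S}_k$ become the singular subspaces for the \emph{bottom} $k$ singular values, and the complementary subspace in Li's theorem consists of the remaining $n-1-k$ directions. Symmetry of principal angles, $\norm{\sin(S_k,\tilde S_k)} = \norm{\sin(\tilde S_k, S_k)}$, is standard for equidimensional subspaces, so only one direction needs proof.

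\textbf{Step 2 (Li's bound).} Applying Li's theorem gives
\[ \norm{\sin(S_k,\tilde S_k)}_F \leq \frac{1}{\eta}\sqrt{\norm{(I-D)V_k}^2 + \norm{(I - \inv{D})V_k}^2}. \]
Here I would use the right-singular-subspace version, in which the perturbation acts on the left factor. The left singular subspace of $DL$ is tilted by $D$, and this is exactly what the $U_1$-terms in Li's bound measure; with $U_1 = V_k$ those terms become the two norms displayed.

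\textbf{Step 3 (the constant $k$; main obstacle).} The remaining work is to account for the factor $k$ and for the relative gap $\eta$. I would pass from Frobenius to spectral norm, which costs at most $\sqrt{k}$ since $\sin\Theta$ is $k \times k$. I would then need $1/\eta$ bounded by an absolute constant times $\sqrt{k}$. This is the main obstacle, because the gap depends on the spectrum of $L$ and not on $k$ alone. My first attempt would be to use the relative gap
\[ \eta = \min \frac{\abs{\grs - \tilde{\grs}}}{\sqrt{\grs\,\tilde{\grs}}}, \]
where $\grs$ ranges over the retained singular values of one matrix and $\tilde{\grs}$ over the discarded singular values of the other. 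I would then argue that, because $D$ is close to $I$, the singular values of $DL$ are within a factor $\norm{D}\norm{\inv{D}}$ of those of $L$ (Ostrowski-type relative bounds). The gap between the $k$-th and $(k+1)$-th positive eigenvalues of $L$ would then be needed to be relatively of order $1/k$.

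If that gap assumption cannot be justified from the statement's hypotheses, I would read the displayed identity as an inequality. In that reading, $k$ plays the role of the combined constant $\sqrt{k}/\eta$, under the standing assumption of a spectral gap at index $k$ of relative size at least $1/\sqrt{k}$. I would state this hypothesis explicitly, since the equality sign in (\ref{eqn:li_bound}) can only be meant as an upper bound.
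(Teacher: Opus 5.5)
Your route is the paper's route. The paper gives no argument for this lemma beyond presenting it as a ``contextually relevant form'' of Li's multiplicative-perturbation $\sin\Theta$ theorem \cite{liRelPert94}. Your instantiation $B = L$, $D_1 = D$, $D_2 = I$ is exactly what that citation amounts to: only the $U_1 = V_k$ terms survive, with $V_k$ the eigenbasis of $L$.

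Your diagnosis is also correct, and its second half can be sharpened: without a gap hypothesis the bound fails with \emph{any} constant in place of $k$. Take $n \geq 3$, $k = 1$, and the weighted complete graph with the weight of edge $\{2,3\}$ lowered to $1-\epsilon$, i.e.\ $L_\epsilon = nI - \mathbf{1}\mathbf{1}^{\top} - \epsilon (e_2 - e_3)(e_2 - e_3)^{\top}$. Let $D = \mathrm{diag}(1-\delta, 1, \dots, 1)$.
\begin{itemize}
\item The bottom positive eigenvector of $L_\epsilon$ is $v = (e_2 - e_3)/\sqrt{2}$, with eigenvalue $n - 2\epsilon$; all other positive eigenvalues equal $n$.
\item Since $L_\epsilon e_1 = n e_1 - \mathbf{1} = \sqrt{n(n-1)}\, w$ for a unit vector $w$ orthogonal to both $\mathbf{1}$ and $v$, you get
\[ L_\epsilon D^2 L_\epsilon = L_\epsilon^2 - (2\delta - \delta^2)\, n(n-1)\, w w^{\top}. \]
Its positive eigenvalues are $(n-2\epsilon)^2$ on $v$, $n^2 - (2\delta-\delta^2)n(n-1)$ on $w$, and $n^2$ on the rest.
\item Once $(2\delta - \delta^2)(n-1) > 4\epsilon - 4\epsilon^2/n$ (e.g.\ $\epsilon = \delta^2$ with $\delta$ small), the bottom positive right singular vector of $DL_\epsilon$ is $w \perp v$. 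The left side of (\ref{eqn:li_bound}) is then $1$.
\item The right side is $0$, because $v_1 = 0$ annihilates $(I - D)v$ and $(I - \inv{D})v$. Under the other reading of $V_k$ in the statement it is $O(\delta)$.
\end{itemize}
This is consistent with Li only because his hypothesis $\eta_2 > 0$ fails: $n - 2\epsilon$ is both the retained singular value of $L_\epsilon$ and a discarded singular value of $DL_\epsilon$. The equality sign fails even more simply: $D = (1+\delta) I$ leaves every singular subspace unchanged while the right side is positive. So the relative-gap hypothesis you propose is genuinely missing from the lemma, not an artefact of your method. The paper never supplies it, including where the lemma is invoked in Theorem \ref{thm:eigspace_approx}.

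A few details to tidy when you write this up.
\begin{itemize}
\item The numerator you quote is the one Li pairs with the gap $\varrho_2(\alpha, \beta) = \abs{\alpha - \beta}/\sqrt{\alpha^2 + \beta^2}$, taken between the retained singular values of $L$ and the discarded ones of $DL$. The $\sqrt{\alpha\beta}$-normalised gap you wrote does not go with this numerator and is always at least $\sqrt{2}$ times larger, so it cannot simply be substituted; state your hypothesis in terms of $\eta_2$.
\item The $\sqrt{k}$ is paid on the right, via $\norm{(I - D)V_k}_F \leq \sqrt{k}\,\norm{(I-D)V_k}_2$. It is not paid on the left, where $\norm{\cdot}_2 \leq \norm{\cdot}_F$ is free. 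The resulting constant $\sqrt{k}/\eta_2$ is at most $k$ exactly when $\eta_2 \geq 1/\sqrt{k}$, so drop the intermediate remark about a relative gap of order $1/k$.
\item $DL$ does not map $\mathbf{1}^{\perp}$ into itself (its left kernel is spanned by $\inv{D}\mathbf{1}$). So rather than ``restricting both operators'', apply Li to the tall matrices $LQ$ and $DLQ$, with $Q$ an orthonormal basis of $\mathbf{1}^{\perp}$. The extra zero in the extended spectrum of $DLQ$ is harmless because $\varrho_2(\sigma, 0) = 1$.
\item Index positive singular values increasingly. Then $\tilde\sigma_j \geq \sigma_j/\norm{\inv{D}}$ gives $\eta_2 \geq \varrho_2(\sigma_k, \sigma_{k+1}/\norm{\inv{D}})$ whenever $\sigma_{k+1} > \norm{\inv{D}}\,\sigma_k$. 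This is the concrete gap condition on $L$ you need to assume.
\end{itemize}
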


%% file: thm.eigspace_approx.tex
\begin{thm} \label{thm:eigspace_approx}
	 Let $\{\abs{\grs_i}\}$ be the set of absolute volumes of $\grs_i \in M$ with mean $\grm_m$, and the property that 
\begin{equation}\label{constr:vol_bounded} 
	1/(1+\gre_s) \leq \abs{\grs_i}/\grm_m \leq (1 + \gre_s), \;\; 0 \leq \gre_s < 1/2.
\end{equation} 

\begin{equation}\label{constr:vol_concentrated}  
	\( \sum_i \( \abs{\grs_i}/\grm_m - 1 \)^2 \)/n \leq s^2.
\end{equation} 

	Let $S_k, \tilde{S}_k$ be the $k$-dimensional eigenspaces corresponding to the $k$ smallest eigenvalues of $\bar{\dlap}_0, \bar{L}$ respectively, with $\grP_k, \tilde{\grP}_k$ being the orthogonal projections onto these subspaces respectively. Then
\begin{equation}\label{result:proj_err}
	\norm{\grP_k - \tilde{\grP}_k}_2 = O(ks).
\end{equation} 
\end{thm}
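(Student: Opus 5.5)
We compare the eigenspaces of $\bar{\dlap}_0 = \dstar \bar{L}$ and $\bar{L}$ (Notation \ref{nota:graph_laplacian}) by writing $\dstar$ as a scalar multiple of a diagonal matrix close to the identity and applying Lemma \ref{lma:li_bound}. Since $k=0$ and $\dstar_0(i,i) = \abs{\astar \grt_{0,i}}/1$ on the vertices of $M'$, whose dual cells are the $m$-plices $\grs_i$ of $M$, we have $\dstar = \grm_m D$ with $D := \Diagonal(\abs{\grs_i}/\grm_m)$. The scalar $\grm_m>0$ changes neither eigenvectors nor the ordering of eigenvalues, so $S_k$ is exactly the bottom-$k$ positive eigenspace of $D\bar{L}$.

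First I will note that $D\bar{L}$ is similar to the symmetric matrix $D^{1/2}\bar{L}D^{1/2}$, so its eigenvalues are real and nonnegative. The kernel is $\mathbf{1}$ for both operators, because the graph underlying $\bar{L}$ is connected ($M$ triangulates a connected manifold). Hence ``smallest $k$ eigenvalues'' may be read as ``smallest $k$ positive eigenvalues'' after quotienting the common constant mode, which is exactly the setting of Lemma \ref{lma:li_bound}. Constraint (\ref{constr:vol_bounded}) gives $\norm{I-D}_2 \le \gre_s < 1/2$ and $\norm{I-\inv{D}}_2 \le \gre_s$, so $D$ is ``close to $I_n$'' in the sense required by the lemma. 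Applying (\ref{eqn:li_bound}) with $\tilde{V}_k$ the eigenbasis of $\bar{L}$ then yields
\[ \norm{\grP_k - \tilde{\grP}_k}_2 = \norm{\sin(S_k,\tilde{S}_k)} \le k\sqrt{\norm{(I-D)\tilde V_k}^2 + \norm{(I-\inv{D})\tilde V_k}^2}. \]

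The key step is to bound $\norm{(I-D)\tilde V_k}$ by $O(s)$ instead of the crude $\gre_s$. Write $\delta_i := \abs{\grs_i}/\grm_m - 1$. For a unit $x\in\tilde{S}_k$, Assumption \ref{ass:laplacian_incoherence} gives
\[ \norm{(I-D)x}_2^2 = \sum_i \delta_i^2 x_i^2 \le \norm{x}_\infty^2 \sum_i \delta_i^2 = O(1/\bar n)\, \bar n s^2 = O(s^2), \]
where the last step uses (\ref{constr:vol_concentrated}), with $n$ there equal to the number of $m$-plices of $M$, i.e. $\bar n$. Similarly, $(1-\inv{D})_{ii} = \delta_i/(1+\delta_i)$ has modulus at most $2\abs{\delta_i}$ since $\abs{\delta_i}\le \gre_s<1/2$, so $\norm{(I-\inv{D})x}_2 = O(s)$. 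Passing from unit vectors to the operator norm on $\tilde V_k$ costs at most a factor $\sqrt{k}$ if done through the Frobenius norm. To avoid that factor I will instead use the sup over the unit sphere of $\tilde S_k$, which Assumption \ref{ass:laplacian_incoherence} controls uniformly. Substituting into the display gives (\ref{result:proj_err}): $\norm{\grP_k-\tilde{\grP}_k}_2 = O(ks)$.

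I expect the main obstacle to be justifying Lemma \ref{lma:li_bound} in this non-symmetric setting. Li's multiplicative bound also carries a relative-gap denominator, and it is hidden in the constant here. I would try first to use the symmetric form $D^{1/2}\bar{L}D^{1/2}$, whose eigenvectors are $D^{-1/2}$ times those of $D\bar{L}$ and so differ from them by a further $O(\norm{I-D^{1/2}}\text{ on }\tilde S_k) = O(s)$, again by incoherence. I would then invoke Li's bound for congruence perturbations, assuming the relative gap between the $k$th and $(k+1)$th eigenvalues of $\bar{L}$ is bounded below uniformly in $h$. Incoherence must also be applied to the eigenspace of the unperturbed $\bar{L}$, which is what the assumption provides, so the choice of which side carries $V_k$ in (\ref{eqn:li_bound}) needs care.
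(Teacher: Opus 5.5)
Your proposal follows the paper's proof essentially step for step: rescale the Hodge star by $\grm_m$, invoke Lemma \ref{lma:li_bound}, and bound $\norm{(I-D)V}$ and $\norm{(I-\inv{D})V}$ by $O(s)$. You do this by combining the $\ell_\infty$ incoherence of the bottom eigenspace of $\bar{L}$ with (\ref{constr:vol_concentrated}), taking the supremum over the unit sphere of that eigenspace exactly as the paper does with its maximizer $y$; your explicit placement of the incoherent basis on the $\bar{L}$ side is the consistent reading of the paper's somewhat conflated $S_k$/$\tilde{S}_k$ notation. The one slip is that the star in $\bar{\dlap}_0=\dstar\bar{L}$ is $\dstar_m$, with entries $1/\abs{\grs_i}$, so the paper's $D=\grm_m\dstar$ is the inverse of yours; this is harmless because the bound is symmetric under $D\leftrightarrow\inv{D}$, and your closing concerns about the hidden relative gap and the singular-versus-eigen subspace mismatch are legitimate issues that the paper's Lemma \ref{lma:li_bound} absorbs rather than resolves.
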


We call the constraint, (\ref{constr:vol_concentrated}), \emph{relative volume-concentration}. We observe that (\ref{result:proj_err}) is meaningful only when $s \ll 1/k$. We call $M$ \emph{relatively volume-concentrated} if this is the case. More formally,
\begin{defn}\label{def:rvc}
	A uniform $m$-dimensional simplicial complex, $M$, is said to be $\gre$ relatively volume-concentrated or to have the property of $\gre$ relative volume-concentration ($\gre$-RVC) if the sample-variance of 
	\[ z_i = n_m \abs{\grs_i}/ \sum_j \abs{\grs_j}   \]
	across all $n_m$ maximal simplices, $\grs_i \in M$, is no greater than $\gre^2 < 1$.
\end{defn}
\begin{proof}
	We begin by noting that $S_k, \tilde{S}_k$ are invariant to positive scalings of $\dlap, L$ and so we may scale $\dlap$ by $\grm_m$. Since $(\grm_m  \dstar)(i, i) = \grm_m/\abs{\grs_i}$ we have by (\ref{constr:vol_bounded}):
	\begin{equation}\label{constr:inv_vol_bounded} 
		\frac{1}{(1 + \gre_s)} \Leq \grm_m \dstar \Leq 1+\gre_s, \;\;  \frac{1}{(1 + \gre_s)} \Leq \frac{\inv{\dstar}}{\grm_m} \Leq 1+\gre_s.
\end{equation}
If $V_{k, \perp}, \tilde{V}_{k, \perp}$ are orthonormal bases for the subspaces orthogonal to $S_k, \tilde{S}_k$ respectively, then  
\[ \norm{\sin(V_k, \tilde{V}_k)} = \norm{V_k ^{\top} \tilde{V}_{k, \perp}} = \norm{V_k V_k ^{\top} \tilde{V}_{k, \perp} \tilde{V}_{k, \perp} ^{\top}} = \norm{\grP_k (I - \tilde{\grP}_k)}   \]
from unitary invariance of $\norm{\cdot}$. We note that for $\gre < 1/2$,
	\begin{equation}\label{eqn:normalProj2ProjErr}
		\norm{(I - \grP_k) \tilde{\grP}_k }_2 = O(\gre) \iff \norm{\grP_k - \tilde{\grP}_k }_2 = O(\gre)  
	\end{equation}
	Lemma \ref{lma:li_bound} with $D =\grm_m \dstar$, $E := I - \grm_m \dstar$, $E' := I - \inv{\dstar}/\grm_m$ now yields: 
\begin{equation}\label{eqn:li_bound_dec} 
	\norm{\sin(V_k, \tilde{V}_k)}_2 = \norm{(I - \grP_k) \tilde{\grP}_k} \leq k \sqrt{\norm{E V_k}_2 ^2 + \norm{E' V_k}_2 ^2}.
\end{equation}

Now $\norm{E'}_F ^2 / n \leq s^2$ by (\ref{constr:vol_concentrated}) and $\norm{E}_F ^2 /n = s^2 O(1)$ since $E \Leq \gre_s$ by (\ref{constr:inv_vol_bounded}) whence, with 
\[ y := \displaystyle \arg \max_{x \in S_k, \norm{x}=1} (x, E^2x), \]
we have that
\[\norm{E V_k}_2 ^2 = (y, E^2 y) = \sum_i E_{ii} ^2 y_i ^2  \leq \norm{y}_{\infty} ^2 \sum E_{ii} ^2  = \norm{y}_{\infty} ^2 n s^2 O(1). 
\]
	Since $S_k$ is a bottom eigenspace of $L$, a graph-laplacian satisfying Assumption \ref{ass:laplacian_incoherence}, we have $\norm{y}_{\infty} ^2 = O(1 / n)$. So $\norm{EV_k}_2 ^2, \norm{E' V_k}_2 ^2$ are $O(s^2)$, immediately yielding (\ref{result:proj_err}) from (\ref{eqn:li_bound_dec}) via (\ref{eqn:normalProj2ProjErr}).
\end{proof}

%% file: S.Results.tex
\section{Experiment and Results} \label{sec:numResults}
\begin{figure}[ht] \label{fig:earth_4572}
	\centering
	\includegraphics[scale=0.34]{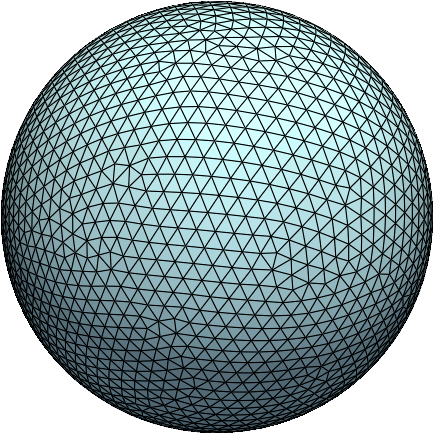}
	\caption{Mesh of the smooth earth's surface used in this experiment. Created with $\mathtt{JIGSAW}$ \cite{jigsaw1}} 
\end{figure}

\begin{figure}[h] \label{fig:subspApprox}
	\includegraphics[scale=0.37]{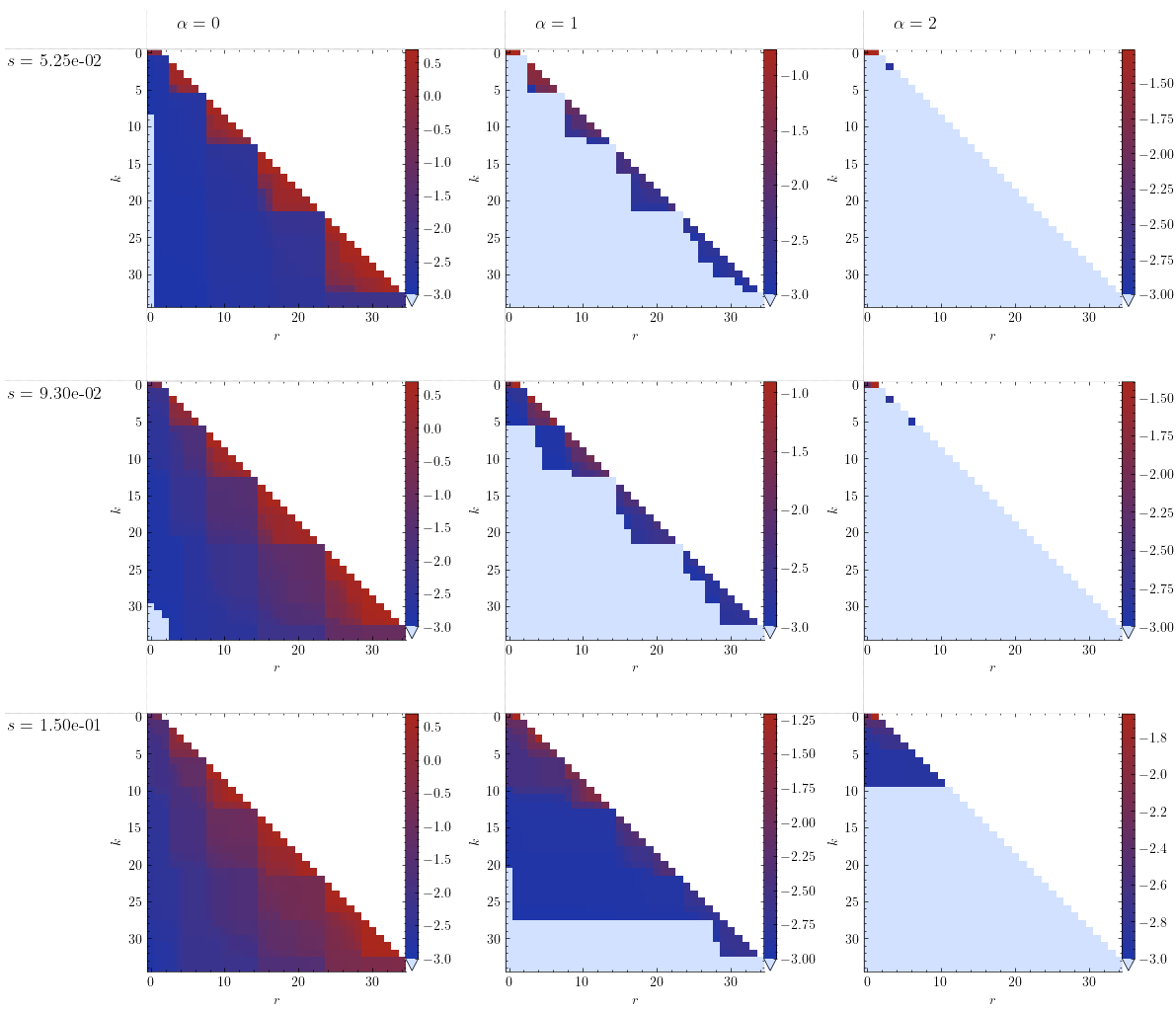}
	\caption{Relative Errors (log) Approximating the $r$-rank Covariance, $\bar{C}_r ^{\gra}$, by Projection, $\tilde \grP_k$, onto the Lowest $k$-dimensional Eigenspace of $\bar{L} = \dd \dstar \dd$ across Relative Volume Deviations, $s$. The top left of each panel corresponds to $(0, 0)$ and $r, k$ increase toward the bottom right in each panel; $(\gra, s)$ do the same over the whole grid.}
\end{figure}

We present the results for the representative example of a mildly oblate spheroid - the earth - remeshed from MPAS\footnote{\url{https://mpas-dev.github.io/atmosphere/atmosphere_meshes.html}} to have $4572$ triangles\footnote{Further experiments conducted using an ellipsoid may be added to later versions of this, that detail how submeshing may be done to achieve volume-concentration in cases where the mesh has a high variance in absolute $m$-plicial volumes. This is avoided here in the interest of brevity}. The goal of this numerical experiment is to see how the errors from approximating the low-rank-truncation, $C^{\gra} _r = \grP_r C^{\gra}$, of the covariance matrix, $C^{\gra}$, of the \emph{simplicial mean} of the GMRF solving (cite eqn) changes with $r$ and the rank, $k$, of the projection, $\tilde{\grP}$. Note that $\grk$ is kept fixed; its effects for this particular goal are considered secondary.

\subsection{Experimental Method}
The MPAS surface-mesh re-triangulated into $4572$ nearly equilateral triangles is used as in Fig. \ref{fig:earth_4572}. Volume deviation is introduced by \emph{latitude-spacing-fields} that vary more toward the equator. Each such latitude-spacing-field corresponds to a single perturbation of the volumes of the triangles in the mesh, resulting in a particular relative-volume-deviation, $s$ as in (\ref{thm:main}). The covariance matrix, $C^{\gra}$, is obtained for various values of $\gra$, three of which are depicted below. All perturbed meshes are kept fully well-centered for the results shown below.

An approximation error, $\gre(r, k)$, is computed for each pair $(r, k)$:
\[ \gre(r, k) = \frac{\norm{\tilde{\grP}_k - \grP_r}_2}{\norm{C^{\gra}}_2}.  \]
 This is consistent with Theorem \ref{thm:main} but for one exception: $r \leq k$ in the experiment instead of $r=k$ as in Theorem \ref{thm:main}. $\log(\gre(r, k))$ is plotted as a heatmap in Figure \ref{fig:subspApprox}. 

\subsection{Results and Discussion}
$\gra=0$ corresponds to the white noise\footnote{verified by observing that exponentiating by 0 gives a spectrum that is the identity, making the resulting covariance the identity} and serves to present the relative alignment of $S_r, \tilde{S}_r$ alone as a baseline. In each column, the errors are monotonic increasing with $s$. In each row, going left to right, the increased spectral decay of $C^{\gra}$ with increase in $\gra$ quickly reduces the approximation errors to below the threshold of $1e-3$. The errors tend to be appreciable only near the diagonal of each panel, where $r = k$, indicating that the laplacian eigenbasis is \emph{incrementally} matched to the eigenbasis of $C^{\gra}$: that the leading eigenspace of the graph-laplacian, $\bar{L}$, projects mostly into the leading eigenspace of $C^{\gra}$ and its projection into lower order eigenspaces of $C^{\gra}$ sharply decreases \footnote{Here "eigenspace" is used to mean the space spanned by eigenvectors having the same eigenvalue; eigenvalues ordered decreasingly}. Critically, the approximation errors get below the tolerance of $1e-3$, for a relatively small $k$: in each panel $k, r$ are at most $35$ in a $4572$-triangle-mesh (less than $1\%$), with $k<30$ sufficing for $\gra \geq 1$. This is consistent with the increased stability of the bottom eigenspaces to relatively incoherent perturbations to a matrix. Since the hodge-star, $\dstar_m$, is seen as a multiplicative perturbation in volume-concentrated meshes, the relatively stable lower eigenspaces of $\bar{P}$ are close to those of $L$, confirming the behaviour near the panel-diagonals.

%% file: S.FutureWork.tex
\section{Conclusion and Future Work}\label{sec:futurework}
Here we developed a convergent approximation to the covariance of the family of continuous Gaussian processes over many manifolds of arbitrary dimension, where the discrete representation meaningfully stands for the covariance of measurements made of the continuous process. Further, we developed an efficient algorithm to obtain a single projection that low-rank approximates the entire two-parameter family of discrete GMRFs arising from Mat\'ern fields, and provided two immediate use-cases for such an approximation: parameter-inference, and compressed-sensing. Along the way, we observed that Discrete Exterior Calculus was consistent with the approach of the Virtual Element Method.

Formally characterizing manifolds for which Assumption \ref{ass:laplacian_incoherence} holds, ensuring that the volume-concentration property may be arrived at easily using submeshing, and studying the role of curvature on the results, have been withheld from this initial version. 

The future work following Remark \ref{rmk:noSVD} is to further investigate archetypal applications of the GMRF-approximation to determine if and how the recent advancements in graph-based numerical algorithms circumvent computing the basis, $V$, explicitly. 

The future work in the direction of mathematical efforts here is to study the nature of vector-GMRFs - which model winds, ocean currents, etc. - as well as higher order forms, where the DEC hodge-laplacian ceases to have a graph-laplacian "core" and becomes more dense. This has already been undertaken in the continuous case in works such as \cite{NicoudKrause24,CaoSheffieldGausskForms26,Borovitskiy20}, and further endeavour here seems likely to be fruitful. 

The underlying thematic question, \emph{"How does one summarize a manifold?"}, in the sense of holding enough information to convey its topological and metric features either probabilistically or deterministically to be of computational/mathematical use, is of great interest to the author.

%% file: S.Credits.tex
\section{Credits}
The computations were mostly performed on Google $\mathtt{colab}$, using $\mathtt{JAX}$. Darren Engwirda's $\mathtt{JIGSAWPY}$ was used for meshing and submeshing \footnote{in results that have been left out of this initial version}.  

\subsection{AI-Use} $\mathtt{Gemini}$ was used for searches\footnote{The default behaviour for Google searches involves AI-response. The author used it directly for quicker, higher quality results}, for finding any missing literature, for citations, and for help when learning $\mathtt{JAX}$; for these purposes, the author found it phenomenally useful. 

No part of this writeup was written/paraphrased from any AI-response. No part of the code used for this work was produced by AI at the request of the author\footnote{Separate notebooks were used for $\mathtt{JAX}$ snippets answered by $\mathtt{Gemini}$ for tinkering to learn}. Individuals and communities are comfortable with various extremes of AI use; the author finds it healthy to disclose the nature of its use in peer-reviewed work.

\section{Acknowledgement}
The author thanks everyone who, in some way, directly caused circumstances where this problem could be worked on fruitfully. Mike Davies, Istvan Gy\"ongy, Peter Kramer, Malik Magdon-Ismail, Endre S\"uli, Abiy Tasissa all deserve thanks. As do the various mathematics libraries in France and Sweden, and the National Library of Scotland in Edinburgh. The manifold hills of Edinburgh, The Pentlands and The Cairngorms were all particularly helpful.

%% file: thebib.bib
@article{rue11,
    author = {Lindgren, Finn and Rue, Håvard and Lindström, Johan},
    title = {An Explicit Link between Gaussian Fields and Gaussian Markov Random Fields: The Stochastic Partial Differential Equation Approach},
    journal = {Journal of the Royal Statistical Society Series B: Statistical Methodology},
    volume = {73},
    number = {4},
    pages = {423-498},
    year = {2011},
    month = {08},
    issn = {1369-7412},
    doi = {10.1111/j.1467-9868.2011.00777.x},
    url = {https://doi.org/10.1111/j.1467-9868.2011.00777.x},
    eprint = {https://academic.oup.com/jrsssb/article-pdf/73/4/423/49163491/jrsssb_73_4_423.pdf},
}

@article{rue22,
title = {The SPDE approach for Gaussian and non-Gaussian fields: 10 years and still running},
journal = {Spatial Statistics},
volume = {50},
pages = {100599},
year = {2022},
note = {Special Issue: The Impact of Spatial Statistics},
issn = {2211-6753},
doi = {https://doi.org/10.1016/j.spasta.2022.100599},
url = {https://www.sciencedirect.com/science/article/pii/S2211675322000057},
author = {Finn Lindgren and David Bolin and Håvard Rue}
}

@article{RueINLA09,
  title = {Approximate Bayesian inference for latent Gaussian models by using integrated nested Laplace approximations},
  author = {Rue, Håvard and Martino, Sara and Chopin, Nicolas},
  journal = {Journal of the Royal Statistical Society: Series B (Statistical Methodology)},
  volume = {71},
  number = {2},
  pages = {319--392},
  year = {2009},
  publisher = {Wiley Online Library}
}

@article{LindgrenRINLA15,
  title = {Bayesian spatial modelling with {R-INLA}},
  author = {Lindgren, Finn and Rue, Håvard},
  journal = {Journal of Statistical Software},
  volume = {63},
  number = {19},
  pages = {1--25},
  year = {2015},
  url = {https://www.jstatsoft.org/v63/i19/}
}

@article{bachl19inlabru,
  title = {inlabru: an R package for Bayesian spatial modelling from ecological survey data},
  author = {Bachl, Fabian E. and Lindgren, Finn and Borchers, David L. and Illian, Janine B.},
  journal = {Methods in Ecology and Evolution},
  volume = {10},
  number = {6},
  pages = {760--766},
  year = {2019},
  publisher = {Wiley Online Library},
  doi = {10.1111/2041-210X.13168},
  url = {https://besjournals.onlinelibrary.wiley.com/doi/abs/10.1111/2041-210X.13168}
}

@article{KellerLindstrom14,
 ISSN = {19326157, 19417330},
 URL = {http://www.jstor.org/stable/23024839},
 author = {David Bolin and Finn Lindgren},
 journal = {The Annals of Applied Statistics},
 number = {1},
 pages = {523--550},
 publisher = {Institute of Mathematical Statistics},
 title = {SPATIAL MODELS GENERATED BY NESTED STOCHASTIC PARTIAL DIFFERENTIAL EQUATIONS, WITH AN APPLICATION TO GLOBAL OZONE MAPPING},
 urldate = {2026-05-31},
 volume = {5},
 year = {2011}
}

@article{OzoneBolinLindgren11,
 ISSN = {19326157, 19417330},
 URL = {http://www.jstor.org/stable/23024839},
 author = {David Bolin and Finn Lindgren},
 journal = {The Annals of Applied Statistics},
 number = {1},
 pages = {523--550},
 publisher = {Institute of Mathematical Statistics},
 title = {SPATIAL MODELS GENERATED BY NESTED STOCHASTIC PARTIAL DIFFERENTIAL EQUATIONS, WITH AN APPLICATION TO GLOBAL OZONE MAPPING},
 urldate = {2026-05-31},
 volume = {5},
 year = {2011}
}

@article{OceanTempDinsdale19,
  title={Modelling ocean temperatures from bio-probes under preferential sampling},
  author={Dinsdale, Daniel and Salibi{\'a}n-Barrera, Mat{\'\i}as},
  journal={The Annals of Applied Statistics},
  volume={13},
  number={2},
  pages={713--745},
  year={2019},
  publisher={Institute of Mathematical Statistics}
}

@article{Konstantinoudis20DiseaseModeling,
  title={Discrete versus continuous domain models for disease mapping},
  author={Konstantinoudis, Garyfallos and Schuhmacher, Dominic and Rue, H{\aa}vard and Spycher, Ben D},
  journal={Spatial and Spatio-temporal Epidemiology},
  volume={32},
  pages={100319},
  year={2020},
  publisher={Elsevier},
  doi={10.1016/j.sste.2019.100319},
  url={https://doi.org/10.1016/j.sste.2019.100319}
}

@article{MalariaBhatt15,
  title={The effect of malaria control on Plasmodium falciparum in Africa between 2000 and 2015},
  author={Bhatt, Samir and Weiss, Daniel J and Cameron, Ewan and Bisanzio, Donal and Mappin, Bob and Dalrymple, Ursula and Battle, Katherine E and Moyes, Catherine L and Henry, Amanda and Eckhoff, Philip A and others},
  journal={Nature},
  volume={526},
  number={7572},
  pages={207--211},
  year={2015},
  publisher={Nature Publishing Group}
}

@article{Cameletti2013Spatio,
  title={Spatio-temporal modeling of particulate matter concentration through the SPDE approach},
  author={Cameletti, Michela and Lindgren, Finn and Simpson, Daniel and Rue, H{\aa}vard},
  journal={AStA Advances in Statistical Analysis},
  volume={97},
  number={2},
  pages={109--131},
  year={2013},
  publisher={Springer}
  }

@article{Fuglstad15,
  title={Exploring a new class of non-stationary spatial Gaussian random fields with varying local anisotropy},
  author={Fuglstad, Geir-Arne and Lindgren, Finn and Simpson, Daniel and Rue, H{\aa}vard},
  journal={Statistica Sinica},
  volume={25},
  number={1},
  pages={115--133},
  year={2015},
  publisher={Institute of Statistical Science, Academia Sinica}
}

@article{RueSimpsonThink12,
    title = {Think continuous: Markovian Gaussian models in spatial statistics},
    journal = {Spatial Statistics},
    volume = {1},
    pages = {16-29},
    year = {2012},
    issn = {2211-6753},
    doi = {https://doi.org/10.1016/j.spasta.2012.02.003},
    publisher={Elsevier},
    author = {Daniel Simpson and Finn Lindgren and Håvard Rue}
}

@phdthesis{Hirani03,
  author = {Hirani, Anil Nirmal},
  title = {Discrete Exterior Calculus},
  school = {California Institute of Technology},
  year = {2003},
  month = {May},
  address = {Pasadena, CA, USA},
  note = {UMI Order Number: AAT 3081691}
}

@article{Desbrun03,
  author = {Desbrun, Mathieu and Hirani, Anil N. and Leok, Melvin and Marsden, Jerrold E.},
  title = {Discrete Exterior Calculus},
  journal = {arXiv preprint},
  year = {2003},
  volume = {cs/0310014}
}

@phdthesis{Kalyanaraman16Hodge,
  author       = {Kalyanaraman, Koushik},
  title        = {Hodge Laplacians on simplicial meshes and graphs},
  school       = {University of Illinois at Urbana-Champaign},
  year         = {2016},
  address      = {Urbana, IL, USA},
  url          = {https://www.ideals.illinois.edu/items/91250},
  note         = {Available via IDEALS}
}

@article{LHL,
	author = {Lim, Lek-Heng},
	title = {Hodge Laplacians on Graphs},
	journal = {SIAM Review},
	volume = {62},
	number = {3},
	pages = {685-715},
	year = {2020},
	doi = {10.1137/18M1223101},
	URL = {https://doi.org/10.1137/18M1223101},
	eprint = {https://doi.org/10.1137/18M1223101}
}

@article{AFW06,
author = {Arnold, Douglas and Falk, Richard and Winther, Ragnar},
year = {2006},
month = {05},
pages = {1 - 155},
title = {Finite element exterior calculus, homological techniques, and applications},
volume = {15},
journal = {Acta Numerica},
doi = {10.1017/S0962492906210018}
}

@article{AFW09,
author = {Arnold, Douglas and Falk, Richard and Winther, Ragnar},
year = {2009},
month = {06},
pages = {},
title = {Finite element exterior calculus: From Hodge theory to numerical stability},
volume = {47},
journal = {Bulletin of the American Mathematical Society},
doi = {10.1090/S0273-0979-10-01278-4}
}

@book{AFWBook18,
  title={Finite Element Exterior Calculus},
  author={Arnold, Douglas N. and Falk, Richard S. and Winther, Ragnar},
  year={2018},
  publisher={Society for Industrial and Applied Mathematics},
  address={Philadelphia, PA}
}

@article{whittle1963,
  title={Stochastic processes in several dimensions},
  author={Whittle, Peter},
  journal={Bulletin of the International Statistical Institute},
  volume={40},
  pages={974--994},
  year={1963}
}

@misc{Kaibo26,
      title={Convergence and Stability of Discrete Exterior Calculus for the Hodge Laplace Problem in Two Dimensions}, 
      author={Chengbin Zhu and Snorre H. Christiansen and Kaibo Hu and Anil N. Hirani},
      year={2026},
      eprint={2505.08966},
      archivePrefix={arXiv},
      primaryClass={math.NA},
      url={https://arxiv.org/abs/2505.08966}, 
}

@misc{GP26,
      title={A Framework for Analysis of DEC Approximations to Hodge-Laplacian Problems using Generalized Whitney Forms}, 
      author={Johnny Guzmán and Pratyush Potu},
      year={2026},
      eprint={2505.08934},
      archivePrefix={arXiv},
      primaryClass={math.NA},
      url={https://arxiv.org/abs/2505.08934}, 
}

@article{TS16DEC,
  title     = {Convergence of Discrete Exterior Calculus Approximations for the Poisson Equation},
  author    = {Tsogtgerel, Gantumur and Schultz, Erick},
  journal   = {arXiv preprint arXiv:1611.03955},
  year      = {2016},
  url       = {https://arxiv.org/abs/1611.03955}
}

@book{whitney57geoint,
  title={Geometric Integration Theory},
  author={Whitney, Hassler},
  year={1957},
  publisher={Princeton University Press},
  address={Princeton, NJ}
}

@book{AMR88,
  title     = {Manifolds, Tensor Analysis, and Applications},
  author    = {Abraham, Ralph and Marsden, Jerrold E. and Ratiu, Tudor},
  edition   = {2nd},
  series    = {Applied Mathematical Sciences},
  volume    = {75},
  year      = {1988},
  publisher = {Springer-Verlag},
  address   = {New York, NY},
  isbn      = {978-0-387-96790-5},
  doi       = {10.1007/978-1-4612-1029-0}
}

@techreport{liRelPert94,
  author      = {Li, Ren-Cang},
  title       = {Relative Perturbation Theory: {(II)} {Eigenspace} and Singular Subspace Variations},
  institution = {Computer Science Division, University of California, Berkeley},
  number      = {UCB/CSD-94-856},
  year        = {1994},
  note        = {Also LAPACK Working Note 85}
}

@misc{CraneDDG,
    author = {Crane, Keenan},
    title = {Discrete Differential Geometry},
    howpublished = {https://brickisland.net/ddg-web/},
    year = {2018},
    note = {Course notes available at Carnegie Mellon University}
}

@incollection{CraneEC23,
    author = {Crane, Keenan},
    title = {Exterior Calculus in Graphics: Course Notes},
    booktitle = {ACM SIGGRAPH 2023 Courses},
    year = {2023},
    publisher = {ACM},
    doi = {10.1145/3587423.3595525},
    url = {https://dl.acm.org/doi/10.1145/3587423.3595525}
}

@article{GeoDistPredCrane17,
	author = {Crane, Keenan and Weischedel, Clarisse and Wardetzky, Max},
	title = {The heat method for distance computation},
	year = {2017},
	issue_date = {November 2017},
	publisher = {Association for Computing Machinery},
	address = {New York, NY, USA},
	volume = {60},
	number = {11},
	issn = {0001-0782},
	url = {https://doi.org/10.1145/3131280},
	doi = {10.1145/3131280},
	journal = {Commun. ACM},
	month = oct,
	pages = {90–99},
	numpages = {10}
}

@inproceedings{kelner09higher,
  title={Higher eigenvalues of graphs},
  author={Kelner, Jonathan A and Lee, James R and Price, Gregory N and Teng, Shang-Hua},
  booktitle={Proceedings of the 50th Annual IEEE Symposium on Foundations of Computer Science (FOCS)},
  pages={735--744},
  year={2009},
  organization={IEEE}
}

@inproceedings{DiscShellsGrinspunHirani03,
	author = {Grinspun, Eitan and Hirani, Anil N. and Desbrun, Mathieu and Schr\"{o}der, Peter},
	title = {Discrete shells},
	year = {2003},
	isbn = {1581136595},
	publisher = {Eurographics Association},
	address = {Goslar, DEU},
	booktitle = {Proceedings of the 2003 ACM SIGGRAPH/Eurographics Symposium on Computer Animation},
	pages = {62–67},
	numpages = {6},
	location = {San Diego, California},
	series = {SCA '03}
}

@article{HodgeRank11,
  author    = {Jiang, Xiaoye and Lim, Lek-Heng and Yao, Yuan and Ye, Yinyu},
  title     = {Statistical ranking and combinatorial {H}odge theory},
  journal   = {Mathematical Programming},
  volume    = {127},
  number    = {1},
  pages     = {203--244},
  year      = {2011},
  publisher = {Springer},
  doi       = {10.1007/s10107-010-0419-x},
  url       = {https://doi.org/10.1007/s10107-010-0419-x}
}

@article{SensNetTahbaz10,
  author    = {Tahbaz-Salehi, Alireza and Jadbabaie, Ali},
  title     = {Distributed coverage verification in sensor networks without location information},
  journal   = {IEEE Transactions on Automatic Control},
  volume    = {55},
  number    = {8},
  pages     = {1837--1849},
  year      = {2010},
  publisher = {IEEE},
  doi       = {10.1109/TAC.2010.2047541},
  url       = {https://doi.org/10.1109/TAC.2010.2047541}
}

@inproceedings{BrainNetHodge19,
	title = {Harmonic holes as the submodules of brain network and network dissimilarity},
	author = {Hyekyoung Lee and Chung, Moo K. and Hongyoon Choi and Hyejin Kang and Seunggyun Ha and Kim, Yu Kyeong and Lee, Dong Soo},
	year = {2019},
	doi = {10.1007/978-3-030-10828-1\_9},
	language = {English},
	isbn = {9783030108274},
	series = {Lecture Notes in Computer Science (including subseries Lecture Notes in Artificial Intelligence and Lecture Notes in Bioinformatics)},
	publisher = {Springer Verlag},
	pages = {110--122},
	editor = {Rebeca Marfil and Mariletty Calder\'on and Antonio Bandera and D\'iaz del R\'io, Fernando and Pedro Real},
	booktitle = {Computational Topology in Image Context - 7th International Workshop, CTIC 2019, Proceedings},
}

@article{LinkPredHodge18,
	author = {Austin R. Benson  and Rediet Abebe  and Michael T. Schaub  and Ali Jadbabaie  and Jon Kleinberg },
	title = {Simplicial closure and higher-order link prediction},
	journal = {Proceedings of the National Academy of Sciences},
	volume = {115},
	number = {48},
	pages = {E11221-E11230},
	year = {2018},
	doi = {10.1073/pnas.1800683115},
	URL = {https://www.pnas.org/doi/abs/10.1073/pnas.1800683115},
	eprint = {https://www.pnas.org/doi/pdf/10.1073/pnas.1800683115}
}

@article{VecFieldDecompGraphics03,
	author = {Tong, Yiying and Lombeyda, Santiago and Hirani, Anil N. and Desbrun, Mathieu},
	title = {Discrete multiscale vector field decomposition},
	year = {2003},
	issue_date = {July 2003},
	publisher = {Association for Computing Machinery},
	address = {New York, NY, USA},
	volume = {22},
	number = {3},
	issn = {0730-0301},
	url = {https://doi.org/10.1145/882262.882290},
	doi = {10.1145/882262.882290},
	journal = {ACM Trans. Graph.},
	month = jul,
	pages = {445–452},
	numpages = {8}
}

@inproceedings{Borovitskiy20,
 	author = {Borovitskiy, Viacheslav and Terenin, Alexander and Mostowsky, Peter and Deisenroth (he/him), Marc},
 	booktitle = {Advances in Neural Information Processing Systems},
 	editor = {H. Larochelle and M. Ranzato and R. Hadsell and M.F. Balcan and H. Lin},
 	pages = {12426--12437},
 	publisher = {Curran Associates, Inc.},
 	title = {Mat\'{e}rn Gaussian Processes on Riemannian Manifolds},
 	url = {https://proceedings.neurips.cc/paper_files/paper/2020/file/92bf5e6240737e0326ea59846a83e076-Paper.pdf},
 	volume = {33},
 	year = {2020}
}

@InProceedings{NicoudKrause24,
	  title = 	 {Intrinsic {G}aussian Vector Fields on Manifolds},
	  author =       {Robert-Nicoud, Daniel and Krause, Andreas and Borovitskiy, Viacheslav},
	  booktitle = 	 {Proceedings of The 27th International Conference on Artificial Intelligence and Statistics},
	  pages = 	 {1306--1314},
	  year = 	 {2024},
	  editor = 	 {Dasgupta, Sanjoy and Mandt, Stephan and Li, Yingzhen},
	  volume = 	 {238},
	  series = 	 {Proceedings of Machine Learning Research},
	  month = 	 {02--04 May},
	  publisher =    {PMLR},
	  url = 	 {https://proceedings.mlr.press/v238/robert-nicoud24a.html}
}

@book{VEMBook22,
  title     = {The Virtual Element Method and its Applications},
  editor    = {Antonietti, Paola F. and Be{\\i}r{\~a}o da Veiga, Louren{\c{c}}o and Manzini, Gianmarco},
  publisher = {Springer International Publishing},
  address   = {Cham},
  year      = {2022},
  volume    = {31},
  series    = {SEMA SIMAI Springer Series},
  isbn      = {978-3-030-95318-8},
  doi       = {10.1007/978-3-030-95319-5},
  url       = {https://link.springer.com/book/10.1007/978-3-030-95319-5}
}

@article{CaoSheffieldGausskForms26,
    author = "Cao, Sky and Sheffield, Scott",
    title = "{Fractional Gaussian Forms and Gauge Theory: An Overview}",
    eprint = "2406.19321",
    archivePrefix = "arXiv",
    primaryClass = "math.PR",
    doi = "10.1007/s11464-024-0135-0",
    journal = "Front. Math.",
    volume = "21",
    number = "1",
    pages = "1--137",
    year = "2026"
}

@article{Rozanov77,
	doi = {10.1070/SM1977v032n04ABEH002404},
	url = {https://doi.org/10.1070/SM1977v032n04ABEH002404},
	year = {1977},
	month = {apr},
	publisher = {},
	volume = {32},
	number = {4},
	pages = {515},
	author = {Ju A Rozanov},
	title = {MARKOV RANDOM FIELDS AND STOCHASTIC PARTIAL DIFFERENTIAL EQUATIONS},
	journal = {Mathematics of the USSR-Sbornik}
	}

@article{PetrosUniformEdge25,
  author       = {Kaiwen He and
                  Petros Drineas and
                  Rajiv Khanna},
  title        = {Structure-Aware Spectral Sparsification via Uniform Edge Sampling},
  journal      = {CoRR},
  volume       = {abs/2510.12669},
  year         = {2025},
  url          = {https://doi.org/10.48550/arXiv.2510.12669},
  doi          = {10.48550/ARXIV.2510.12669},
  eprinttype   = {arXiv},
  eprint       = {2510.12669},
  bibsource    = {dblp computer science bibliography, https://dblp.org}
}

@inproceedings{Lazysvd16,
	author = {Allen-Zhu, Zeyuan and Li, Yuanzhi},
	title = {LazySVD: even faster SVD decomposition yet without agonizing pain},
	year = {2016},
	isbn = {9781510838819},
	publisher = {Curran Associates Inc.},
	address = {Red Hook, NY, USA},
	booktitle = {Proceedings of the 30th International Conference on Neural Information Processing Systems},
	pages = {982–990},
	numpages = {9},
	location = {Barcelona, Spain},
	series = {NIPS'16}
}

@article{GradBoundsGBCFloaterGillette14,
	author = {Floater, Michael and Gillette, Andrew and Sukumar, N.},
	title = {Gradient Bounds for Wachspress Coordinates on Polytopes},
	journal = {SIAM Journal on Numerical Analysis},
	volume = {52},
	number = {1},
	pages = {515-532},
	year = {2014},
	doi = {10.1137/130925712},
	URL = {https://doi.org/10.1137/130925712},
	eprint = {https://doi.org/10.1137/130925712}
}

@article{DiffFormsPolytopeChristiansen08,
	author = {Christiansen, Snorre H.},
	title = {A CONSTRUCTION OF SPACES OF COMPATIBLE DIFFERENTIAL FORMS ON CELLULAR COMPLEXES},
	journal = {Mathematical Models and Methods in Applied Sciences},
	volume = {18},
	number = {05},
	pages = {739-757},
	year = {2008},
	doi = {10.1142/S021820250800284X},
	URL = {
		https://doi.org/10.1142/S021820250800284X
	},
	eprint = {
		https://doi.org/10.1142/S021820250800284X
	}
}

@Article{jigsaw1,
AUTHOR = {Engwirda, D.},
TITLE = {JIGSAW-GEO (1.0): locally orthogonal staggered unstructured grid generation for general circulation modelling on the sphere},
JOURNAL = {Geoscientific Model Development},
VOLUME = {10},
YEAR = {2017},
NUMBER = {6},
PAGES = {2117--2140},
URL = {https://gmd.copernicus.org/articles/10/2117/2017/},
DOI = {10.5194/gmd-10-2117-2017}
}

@article{Floater15, 
	title={Generalized barycentric coordinates and applications}, 
	volume={24}, 
	DOI={10.1017/S0962492914000129}, 
	journal={Acta Numerica}, 
	author={Floater, Michael S.}, 
	year={2015}, 
	pages={161–214}}

@article{HMT11,
	author = {Halko, N. and Martinsson, P. G. and Tropp, J. A.},
	title = {Finding Structure with Randomness: Probabilistic Algorithms for Constructing Approximate Matrix Decompositions},
	journal = {SIAM Review},
	volume = {53},
	number = {2},
	pages = {217-288},
	year = {2011},
	doi = {10.1137/090771806},
	URL = {https://doi.org/10.1137/090771806},
	eprint = {https://doi.org/10.1137/090771806}
}

@article{RST10,
	author = {Rokhlin, Vladimir and Szlam, Arthur and Tygert, Mark},
	title = {A Randomized Algorithm for Principal Component Analysis},
	journal = {SIAM Journal on Matrix Analysis and Applications},
	volume = {31},
	number = {3},
	pages = {1100-1124},
	year = {2010},
	doi = {10.1137/080736417},
	URL = {https://doi.org/10.1137/080736417},
	eprint = {https://doi.org/10.1137/080736417}
}

@article{ChenThom85, 
	title={The lumped mass finite element method for a parabolic problem}, 
	volume={26}, 
	DOI={10.1017/S0334270000004549}, 
	number={3}, 
	journal={The Journal of the Australian Mathematical Society. Series B. Applied Mathematics}, 
	author={Chen, C. M. and Thomée, V.}, 
	year={1985}, 
	pages={329–354}
}

@article{lobpcg1,
  author       = {Andrew V. Knyazev},
  title        = {Toward the Optimal Preconditioned Eigensolver: Locally Optimal Block
                  Preconditioned Conjugate Gradient Method},
  journal      = {{SIAM} J. Sci. Comput.},
  volume       = {23},
  number       = {2},
  pages        = {517--541},
  year         = {2001},
  url          = {https://doi.org/10.1137/S1064827500366124},
  doi          = {10.1137/S1064827500366124},
  bibsource    = {dblp computer science bibliography, https://dblp.org}
}

@article{lobpcg2,
  author       = {Andrew Knyazev},
  title        = {Recent implementations, applications, and extensions of the Locally
                  Optimal Block Preconditioned Conjugate Gradient method {(LOBPCG)}},
  journal      = {CoRR},
  volume       = {abs/1708.08354},
  year         = {2017},
  url          = {http://arxiv.org/abs/1708.08354},
  eprinttype   = {arXiv},
  eprint       = {1708.08354},
  bibsource    = {dblp computer science bibliography, https://dblp.org}
}

@inproceedings{KnyazevPoly15,
  author       = {Andrew Knyazev and
                  Alexander Malyshev},
  editor       = {Deniz Erdogmus and
                  Murat Ak{\c{c}}akaya and
                  Suleyman Serdar Kozat and
                  Jan Larsen},
  title        = {Accelerated graph-based spectral polynomial filters},
  booktitle    = {25th {IEEE} International Workshop on Machine Learning for Signal
                  Processing, {MLSP} 2015, Boston, MA, USA, September 17-20, 2015},
  pages        = {1--6},
  publisher    = {{IEEE}},
  year         = {2015},
  url          = {https://doi.org/10.1109/MLSP.2015.7324315},
  doi          = {10.1109/MLSP.2015.7324315},
  bibsource    = {dblp computer science bibliography, https://dblp.org}
}

@inproceedings{ChebyPolyFIlterClustering24,
  author       = {Liang Du and
                  Yunhui Liang and
                  Mian Ilyas Ahmad and
                  Peng Zhou},
  title        = {K-Means Clustering Based on Chebyshev Polynomial Graph Filtering},
  booktitle    = {{IEEE} International Conference on Acoustics, Speech and Signal Processing,
                  {ICASSP} 2024, Seoul, Republic of Korea, April 14-19, 2024},
  pages        = {7175--7179},
  publisher    = {{IEEE}},
  year         = {2024},
  url          = {https://doi.org/10.1109/ICASSP48485.2024.10446384},
  doi          = {10.1109/ICASSP48485.2024.10446384},
  bibsource    = {dblp computer science bibliography, https://dblp.org}
}

@article{SpecSparsSpielEtAl13,
  author       = {Joshua D. Batson and
                  Daniel A. Spielman and
                  Nikhil Srivastava and
                  Shang{-}Hua Teng},
  title        = {Spectral sparsification of graphs: theory and algorithms},
  journal      = {Commun. {ACM}},
  volume       = {56},
  number       = {8},
  pages        = {87--94},
  year         = {2013},
  url          = {https://doi.org/10.1145/2492007.2492029},
  doi          = {10.1145/2492007.2492029},
  bibsource    = {dblp computer science bibliography, https://dblp.org}
}
